\newcommand{\rl}{{\mathbb{R}}}
\newcommand{\cx}{{\mathbb{C}}}
\newcommand{\e}{\varepsilon}
\newcommand{\tmop}[1]{\ensuremath{\operatorname{#1}}}
\renewcommand{\Re}{\tmop{Re}}
\renewcommand {\a}{\alpha}
\newtheorem{theorem}{Theorem}
\newtheorem{lemma}{Lemma}
\newtheorem{prop}{Proposition}
\newtheorem{cor}{Corollary}
\begin{document}
\title{Local polynomial convexity of the unfolded Whitney umbrella in $\mathbb C^2$}
\author{Rasul Shafikov* and Alexandre Sukhov**}
\begin{abstract}
The paper considers a class of Lagrangian surfaces in $\cx^2$ with isolated singularities of the unfolded 
Whitney umbrella type. We prove that generically such a surface is locally polynomially convex near a 
singular point of this kind.
\end{abstract}

\maketitle

MSC: 32E20, 32E30, 32V40, 53D12.

\bigskip

Key words: totally real manifold, Lagrangian manifold, Whitney umbrella, polynomial convexity,  characteristic foliation, dynamical system, Newton diagram.

\bigskip
* Department of Mathematics, the University of Western Ontario, London, Ontario, N6A 5B7, Canada,
e-mail: shafikov@uwo.ca. The author is partially supported by the Natural Sciences and Engineering 
Research Council of Canada.

**Universit\'e des Sciences et Technologies de Lille, 
U.F.R. de Math\'ematiques, 59655 Villeneuve d'Ascq, Cedex, France,
e-mail: sukhov@math.univ-lille1.fr

\section{Introduction}\label{s:1}

Polynomial convexity of real submanifolds of $\cx^n$ is a well-studied subject in complex analysis due to its 
deep relation to  the approximation problems, pluripotential theory  and Banach algebras (see, for instance, 
\cite{AlWe,St} for a detailed discussion).  M.~Gromov \cite{Gr}  found remarkable connections between the
polynomial (or the holomorphic disc) convexity of real manifolds and global rigidity of symplectic structures. 
In the present work we prove that a generic  Lagrangian surface in $\cx^2$ is polynomially convex near an 
isolated singularity  which is topologically an unfolded Whitney umbrella. This study is inspired by the work of 
A.~Givental \cite{G}, where he proved in particular that a compact real surface $S$ admits a smooth map 
$\iota: S \to \mathbb C^2$, isotropic with respect to the standard symplectic structure on $\cx^2$, such 
that the singularities of $\iota$ are isolated and either self-intersections or unfolded Whitney umbrellas. More 
precisely, if we denote by $z = x + iy$ and $w = u + iv$ the standard coordinates in $\cx^2$, then
$$\omega = dx \wedge dy + du \wedge dv$$ is the 
standard symplectic form on $\cx^2$. A smooth map $\phi: \cx^2 \to \cx^2$ is called {\it symplectic} if 
$\phi^*\omega = \omega$. Since such a map   is a local diffeomorphism, we call it  a (local) 
{\it  symplectomorphism}.  A smooth map $\iota: S\to (\cx^2, \omega)$ from a smooth real surface $S$ is 
called {\it isotropic} if $\iota^*\omega=0$.   A.~Givental~\cite{G} showed that near a generic point 
$p\in S$, which is an isolated singular point of $\iota$ of rank one, the map
\begin{equation}\label{e:whitney}
\pi: \mathbb R^2_{(t,s)} \to \mathbb R^4_{(x,u,y,v)}: (t,s) \to \left(ts,  \frac{2t^3}{3}, t^2, s\right)
\end{equation}
is a local normal form for $\iota$. In particular, this means that there exists a local 
symplectomorphism near $\iota(p)$ sending $\iota(S)$ onto a neighbourhood of the origin in 
$\Sigma := \pi(\rl^2)$.  The set $\Sigma$, as well as $\iota(S)$ near $\iota(p)$, is called the  
{\it unfolded (or open) Whitney umbrella}.  Our main result is the following.

\begin{theorem}\label{t:ga}
Suppose $\phi: \cx^2 \to \cx^2$ is either a generic real analytic symplectomorphism near the origin,
or the identity map. Then there exists a  neighbourhood of the point $\phi(0)$ in the surface $\phi(\Sigma)$ 
with compact polynomially convex closure.
\end{theorem}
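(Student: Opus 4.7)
The plan is to reduce the theorem to constructing a plurisubharmonic barrier at the singular point $\phi(0)$, after checking that total reality holds everywhere else. A direct computation with the parametrisation shows that at $(t,s)\neq(0,0)$ the tangent vectors $\partial_t\pi=(s+2it,\,2t^2)$ and $\partial_s\pi=(t,i)$ are $\cx$-linearly independent, since their $2\times 2$ determinant $is-2t-2t^3$ vanishes only at the origin. Hence $\phi(\Sigma)\setminus\{\phi(0)\}$ is a real-analytic totally real surface in $\cx^2$, and the classical local polynomial convexity theorem for real-analytic totally real manifolds provides a polynomially convex neighbourhood at each of its points. The entire content of the theorem therefore lies in the behaviour near $\phi(0)$.

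For the identity case I would construct a plurisubharmonic function $\rho$ defined in a neighbourhood $U$ of $0\in\cx^2$ with $\rho(0)=0$ and $\rho<0$ on $(\Sigma\cap U)\setminus\{0\}$. On $\Sigma$, the function $-\Im z=-t^2$ is non-positive and vanishes precisely on the line $\{t=0\}$, along which $\Sigma$ reduces to the curve $\{(0,is):s\in\rl\}$; on this curve one has $\Re(-w^2)=-s^2$. Guided by the Newton diagram of $(z,w)=(ts+it^2,\tfrac23 t^3+is)$ in the weighted monomials in $t,s$, I would take $\rho$ of the form
\[
\rho(z,w)=-\Im z + A|z|^{2} - B\,\Re w^{2} + (\text{higher order correction})
\]
for suitable positive constants $A,B$, chosen so that (i) on $\Sigma$ each summand can be estimated by a quasi-homogeneous polynomial in $(t,s)$ whose leading term is strictly negative off the origin, and (ii) plurisubharmonicity on $U$ follows from a Levi form computation dominated by the $A|z|^{2}$ term. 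From such a $\rho$, the sublevel sets $\{\rho\le -\e\}$ are pseudoconvex and meet $\Sigma$ in a compact totally real real-analytic surface whose polynomial hull lies within $\Sigma$ by the totally real case; letting $\e\to 0^{+}$ gives local polynomial convexity of $\Sigma$ at $0$.

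For a generic symplectomorphism $\phi$, the main additional input is the characteristic foliation on $\phi(\Sigma)$, defined on the totally real part as the integral curves of the line field $T\phi(\Sigma)\cap JT\phi(\Sigma)$, and which extends to a singular real vector field at $\phi(0)$. The Lagrangian condition constrains the possible types of this singularity via a symplectic normal form, and the genericity hypothesis rules out the degenerate types, leaving a non-degenerate (for instance hyperbolic) singular point whose two separatrices play the role of the line $\{t=0\}$ from the identity case. In adapted local coordinates in which the characteristic foliation is suitably normalised, one can then transplant the Newton-diagram construction of $\rho$ from the identity case to $\phi(\Sigma)$, producing the desired plurisubharmonic barrier at $\phi(0)$. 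The principal technical obstacle, and what I expect the bulk of the work to concern, is precisely this dynamical-systems step: identifying the correct notion of genericity, classifying the singularities of the characteristic foliation under the Lagrangian constraint, and constructing linearising coordinates with enough regularity to carry the Newton-diagram estimates through.
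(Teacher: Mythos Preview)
Your proposal has two substantial gaps, one analytic and one dynamical.

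\medskip

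\textbf{The barrier argument.} Your candidate function does not do what you want. On $\Sigma$ one has $\Im z=t^{2}$ and $\Re w^{2}=\tfrac{4}{9}t^{6}-s^{2}$, so
\[
\rho|_{\Sigma}=-t^{2}+A(t^{2}s^{2}+t^{4})+B\Bigl(s^{2}-\tfrac{4}{9}t^{6}\Bigr)+\cdots,
\]
which is \emph{positive} along $\{t=0,\ s\neq0\}$ as soon as $B>0$. More seriously, even granting a plurisubharmonic $\rho$ with $\rho|_{\Sigma\setminus\{0\}}<0$, your deduction ``the hull lies within $\Sigma$ by the totally real case'' is not valid: compact totally real surfaces are by no means automatically polynomially convex (this is precisely the subtle point for hyperbolic versus elliptic complex points). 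What a plurisubharmonic function gives you is only that the hull stays in a sublevel set, not that it collapses to $\Sigma$. The paper does use a plurisubharmonic defining function, but in the opposite way: it finds an explicit quartic $\rho$ with $\rho\equiv0$ on $\Sigma$ (so $\Sigma$ sits inside a strictly pseudoconvex hypersurface $M=\{\rho=0\}$ with an isolated singularity), and this forces the hull into $\overline{\{\rho\le0\}}$. The passage from that to polynomial convexity is not via a barrier but via Oka's characterisation of hulls together with the Duval--J\"oricke argument: the essential hull of $\Sigma$ cannot touch a leaf of the characteristic foliation of $\Sigma\subset M$ without crossing it. Once one knows the foliation is a topological saddle, this forces the trace of the hull into the two separatrices, whose union is polynomially convex by Stolzenberg's theorem on arcs, and Rossi's local maximum principle finishes. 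You have not mentioned this Oka/Duval step, and without it there is no route from the foliation picture to polynomial convexity.

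\medskip

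\textbf{The dynamics.} Your expectation that genericity leaves ``a non-degenerate (for instance hyperbolic) singular point'' is incorrect and underestimates the difficulty. Pulling the characteristic foliation back to the $(t,s)$-plane via $\phi\circ\pi$ produces a vector field whose linear part \emph{vanishes identically}; the leading terms are cubic in $(t,s)$ for the standard umbrella and of mixed degree $2$--$4$ for a generic $\phi$. Hartman--Grobman is unavailable, and there are no linearising coordinates to transplant your construction into. The paper spends the bulk of its length precisely here: it computes the relevant jet of the vector field, sets up the Newton diagram, and applies Bruno's normal-form machinery (quasihomogeneous blow-ups) sector by sector to show that, under an explicit genericity condition on the $2$-jet of $\phi$, the phase portrait is a topological saddle with two separatrices of the form $s=c_{\pm}t^{2}$. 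That is the content of the genericity hypothesis, and it is a codimension-two condition on jets, not the exclusion of a degenerate eigenvalue.
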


The case where $\phi$ is the identity map is considered separately since it is
not generic. This implies that the Whitney umbrella $\Sigma$ is polynomially
convex near the origin. The above theorem also holds under weaker assumptions, namely, if $\phi$ is a generic 
local real analytic diffeomorphism and $D\phi(0)$, the differential of $\phi$ at zero, is symplectic, or if $\phi$ 
is a  $C^\infty$-smooth symplectomorphism with the jet  at the origin satisfying
some additional assumptions. See Section~5 for details.

Denote by  $\mathbb B(p,r)$ the open Euclidean ball of $\cx^2$ of radius $r> 0$ centred at $p$. As an 
application of Theorem~\ref{t:ga} we obtain the following result.

\begin{cor}\label{c:1}
Let $\phi$ be as in Theorem~\ref{t:ga}. Then for $\e>0$ sufficiently small, any continuous function on 
$\phi(\Sigma)\cap \mathbb B(\phi(0),\e)$ can be uniformly approximated by holomorphic polynomials.
\end{cor}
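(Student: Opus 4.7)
The plan is to reduce the corollary to Theorem~\ref{t:ga} together with a standard uniform approximation theorem for polynomially convex sets whose smooth locus is totally real.

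First I would fix a compact polynomially convex neighbourhood $\overline U\subset \phi(\Sig)$ of $\phi(0)$ produced by Theorem~\ref{t:ga}. For $\e>0$ sufficiently small the inclusion $\phi(\Sig)\cap\overline{\mathbb B(\phi(0),\e)}\subset\overline U$ holds, and hence
$$K_\e:=\phi(\Sig)\cap\overline{\mathbb B(\phi(0),\e)}=\overline U\cap\overline{\mathbb B(\phi(0),\e)}.$$
Closed Euclidean balls in $\cx^2$ are polynomially convex, and the intersection of two compact polynomially convex sets is polynomially convex; consequently $K_\e$ is polynomially convex.

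Next I would describe the smooth stratum of $K_\e$. The parametrisation \eqref{e:whitney} is globally injective and has Jacobian of rank two away from the origin, so $\Sig\setminus\{0\}$ is a real-analytic surface, and the same holds for $\phi(\Sig)\setminus\{\phi(0)\}$. Because $\phi$ is symplectic (or the identity), $(\phi\circ\pi)^*\omega=\pi^*\omega=0$, so $\phi(\Sig)$ is Lagrangian at every smooth point. Every Lagrangian subspace of $\cx^2$ is totally real, since $\omega(X,JX)=|X|^2$ forces $X=0$ whenever both $X,JX$ lie in it. Hence $K_\e\setminus\{\phi(0)\}$ is a totally real real-analytic submanifold of $\cx^2\setminus\{\phi(0)\}$.

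Finally, I would appeal to the approximation theorem of O'Farrell--Preskenis--Walsh (see, e.g., Stout's monograph \emph{Polynomial Convexity}): if $X\subset\cx^n$ is compact and $E\subset X$ is closed with $X\setminus E$ a totally real $C^1$ submanifold, then a continuous function $f$ on $X$ lies in the uniform closure of rational functions holomorphic near $X$ if and only if $f|_E$ does. Taking $X=K_\e$ and $E=\{\phi(0)\}$, the restriction hypothesis is automatic since $C(E)\cong\cx$ and constants are trivially approximable. Polynomial convexity of $K_\e$ then upgrades rational approximation to polynomial approximation via Oka--Weil, yielding $P(K_\e)=C(K_\e)$, which is the assertion of the corollary. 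The substance of Corollary~\ref{c:1} is thus entirely absorbed into Theorem~\ref{t:ga}, and I foresee no real obstacle beyond identifying the correct black-box approximation theorem and verifying its hypotheses, which in our situation are immediate because the singular set is a single point.
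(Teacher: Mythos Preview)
Your argument is correct and in fact a bit cleaner than the paper's. Both proofs invoke the same black box---an approximation theorem reducing $P(X)=C(X)$ to an exceptional set where $X$ fails to be a totally real submanifold---but the choice of exceptional set differs. You take $E=\{\phi(0)\}$, so the reduction step is trivial. The paper instead uses the Anderson--Izzo--Wermer formulation, which requires $X\setminus X_0$ to be a genuine $C^1$ submanifold (without boundary), and therefore sets $X_0=\{\phi(0)\}\cup\bigl(\phi(\Sigma)\cap\partial\mathbb B(\phi(0),\e)\bigr)$; it must then separately argue that $X_0$ is polynomially convex (otherwise $\widehat{X_0}\setminus X_0$ would contain a one-dimensional variety sitting inside $\widehat X$, contradicting Theorem~\ref{t:ga}) and that $C(X_0)=P(X_0)$ (via Stolzenberg's approximation theorem on smooth arcs). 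Your intersection argument $K_\e=\overline U\cap\overline{\mathbb B(\phi(0),\e)}$ for polynomial convexity is also tidier than re-invoking the theorem.

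One small imprecision to flag: $K_\e\setminus\{\phi(0)\}$ is a manifold \emph{with boundary} at points of $\phi(\Sigma)\cap\partial\mathbb B(\phi(0),\e)$, not a submanifold in the strict sense. This is harmless provided you quote O'Farrell--Preskenis--Walsh in the form requiring only that $X\setminus E$ be a \emph{totally real set} (i.e., locally contained in some totally real $C^1$ submanifold), which is how it appears in Stout's monograph; at the boundary points $K_\e$ sits inside the ambient totally real surface $\phi(\Sigma)\setminus\{\phi(0)\}$, so the hypothesis holds. With that wording adjusted, your route bypasses the extra work the paper does on the boundary circle.
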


It will be shown in Section 4 that the genericity assumption of Theorem~\ref{t:ga} imposes restrictions only on 
the 2-jet of $\phi$ at the origin. More precisely, it suffices to require that such a jet does not lie in a real algebraic 
submanifold of codimension 2 (after the standard identification of the space of 2-jets at the origin with the Euclidean 
space). Our approach is based on the observation  that $\phi(\Sigma)$  is contained in the zero locus set $M$ of a 
strictly plurisubharmonic function with a unique critical point at the origin. Hence $M$ is a
 strictly pseudoconvex hypersurface smooth everywhere except the origin. This allows us to consider the characteristic 
 foliation induced on $\phi(\Sigma)$  by the embedding $\phi(\Sigma) \hookrightarrow M$. The origin is a unique 
 singular point for this foliation. It follows by the Hopf lemma that if $f$ is a holomorphic disc with boundary attached to 
 $\phi(\Sigma)$, then its boundary is transverse to the leaves of the characteristic foliation at every point different from 
 the origin.  Suppose now  that the  structure of  leaves of the characteristic foliation near the origin is topologically the 
 same as the phase portrait of a dynamical system near a saddle stationary point on the plane. Then the  boundary of 
 $f$ will {\it touch} a leaf of the characteristic foliation proving that such a holomorphic  disc does not exist. This 
 observation suggests a strategy for the proof of our main result.  The proof consists of two parts. 
 
First, we use Oka's Characterization Theorem for hulls \cite{O}, developed and adapted to the case under consideration 
in the work of G.~Stolzenberg \cite{S}, J.~Duval \cite{D} and B.~J\"oricke \cite{Jo1}. This enables us to generalize the 
above argument and prove polynomial convexity of $\phi(\Sigma)$ near the origin under the assumption that the phase 
portrait of the characteristic foliation is topologically a saddle (Sections 2 and 3). The remainder of the paper (Sections~4--7) 
is devoted to the study of the characteristic foliation near the origin. In Section 4 we write explicitly a 5-jet of the corresponding dynamical system on the plane; the origin is a stationary point with a high order of degeneracy. At the end of this section 
we describe explicitly the genericity assumption on the $2$-jet of $\phi$. Section 5 is expository: for the reader's 
convenience we recall relevant tools from the local theory of dynamical systems; in particular, we explain where the real 
analyticity assumption comes from. In Sections~6 and~7 we give a complete topological description of the phase portrait 
of the above dynamical system proving that it is a saddle. 

The problem remains open to determine local polynomial convexity for nongeneric Whitney umbrellas as we have no 
counterexamples to Theorem~\ref{t:ga}  if the genericity assumption is dropped. Our method relies on the properties 
of the phase portrait of the dynamical system associated with the characteristic foliation near the umbrella, and cannot be 
applied if some specific terms in the low-order jets at the origin of the map $\phi$ vanish. On the other hand, in applications 
to topological properties of surfaces the generic situation is often sufficient. Furthermore, our method works in some 
nongeneric cases, for instance, for the standard umbrella $\Sigma$ (this case is treated separately in Sections~4 and~6).

Convexity (polynomial, rational or holomorphic) of a Lagrangian or totally real manifold $E$ embedded into $\cx^n$  have 
been studied by several authors (see, for instance, \cite{Al, AlWe, Du1, DuGa, Gr, IvSh, St}). It is well known that the 
local polynomial convexity can fail near points where $E$ is not totally real. In the complex dimension $n = 2$, the 
tangent space of $E$ is a complex line, so such points are called {\it complex}; generically these points are 
isolated in $E$. The complex geometry of these points is well understood by now. There are three types  of generic 
complex points: elliptic, hyperbolic and parabolic (see, for instance, \cite{AlWe, St}), and the local polynomial convexity 
depends on the type. H.~Bishop \cite{Bi} and C.~Kenig - S.~Webster \cite{KW} proved that a neighbourhood of an 
elliptic point in $E$ has a nontrivial hull. On the other hand, F.~Forstneri\v c and E.~L.~Stout \cite{FoSt} proved that 
$E$ is locally polynomially convex near a hyperbolic point. The parabolic case is intermediate and in general both 
possibilities occur. This case was studied by B.~J\"oricke~\cite{Jo1,Jo2}. These results and their development have 
several important applications, in particular,  to the problem of complex and symplectic filling and topological classification 
of 3-contact structures.

In general, a compact real surface does not admit a Lagrangian or totally real embedding into 
$\cx^2$, for instance, torus is the only compact orientable real surface admitting  a Lagrangian embedding into 
$\cx^2$. By comparison, Givental's result is quite general as it applies to {\it all} compact surfaces. This makes it 
natural to study self-intersections and Whitney umbrellas on immersed Lagrangian manifolds in analogy with local
analysis of real surfaces near complex points.  Currently, only few results are obtained in this direction.  

The present work is the first step  in the study  of the most general case where  Whitney umbrellas arise. Our result 
implies that local convexity properties near a  generic real analytic  Lagrangian deformation of the standard  Whitney 
umbrella are similar to those of a hyperbolic point. This is a necessary step leading toward understanding of the
global geometry of immersed Lagrangian manifolds containing  Whitney umbrellas.

We thank S.~Nemirovski and V.~Shevchishin for bringing our attention to this problem and for helpful 
conversations. Also we would like to thank the anonymous referee for many constructive comments that
helped improve the exposition of the paper. The work on this paper was started in the fall of 2010 when the first author 
visited University Lille-I and the Laboratoire Paul Painl\'ev\'e, and was completed when the second author visited 
Indiana University and the University of Western Ontario in the fall of 2011. We thank these institutions for their support 
and excellent work conditions.

\section{Geometry of Whitney umbrellas}

The map $\pi: \rl^2_{(t,s)} \to \rl^4_{(x,u,y,v)}$ given by \eqref{e:whitney} is a smooth homeomorphism onto its 
image, nondegenerate except at the origin, where the rank of $\pi$ equals one. It satisfies $\pi^*\omega\equiv 0$, 
and so $\Sigma$ is a Lagrangian submanifold of $(\cx^2,\omega)$ with an isolated singular point at the origin. Thus,
$$
\Sigma = \{ (z,w)\in \cx^2: x=ts,\ u=\frac{2t^3}{3}, \ y=t^2,\ v=s;\ t,s\in \rl\}.
$$

The crucial role in our approach is played by  an auxiliary real
hypersurface $M$ defined by 
\begin{equation}\label{e:M}
M = \{(z,w)\in \cx^2: \rho(z,w)=x^2 - yv^2 + \frac{9}{4}u^2 -y^3=0\}.
\end{equation}
Clearly, $\Sigma$ is contained in  $ M$. Note that the hypersurface $M$ is smooth
away from the origin, and strictly pseudoconvex in $\mathbb B(0,\e)
\setminus \{0\}$ for $\e$ sufficiently small. 

Suppose now that $\phi: \cx^2 \to \cx^2$ is a local smooth diffeomorphism near the origin 
such that its linear part $D\phi(0)$ at the origin is a symplectic map.
Without loss of generality we may assume that $\phi(0)=0$.   The standard symplectic structure on 
$\cx^2$ is given by the matrix
$$\Omega = \left(\begin{array}{cc}0 & I_2\\ -I_2 & 0\end{array}\right),$$
where $I_2$ denotes the identity matrix on $\rl^2$. Similarly, we write
\begin{equation}
D\phi(0) = \left(\begin{array}{cc}A & B\\ C & D\end{array}\right).
\end{equation}
The condition that  $D\phi(0)$ is symplectic means that  $(D\phi(0))^t\, \Omega\, D\phi(0) = \Omega$ 
(where $t$ stands for matrix transposition). Therefore, the  real $(2
\times 2)$-matrices $A$, $B=(b_{jk})$, $C$, 
$D=(d_{jk})$ satisfy
\begin{equation}\label{e:symp_id}
A^t D - C^t B = I_2,\ A^tC = C^t A,  \ D^t B = B^t D.
\end{equation}

The standard complex structure of $\cx^2$ in real coordinates is given by the matrix
$$J=
\left( \begin{array}{cc}
0 & -I_2 \\
I_2 & 0  
\end{array}\right),
$$
which corresponds to multiplication by $i$. We perform an additional 
complex linear change of coordinates~$\psi$.
 Let $\psi: \rl^4 \to \rl^4$ be a linear transformation 
given by the $4\times 4$ matrix
\begin{equation}
\label{e:psi}
\left(\begin{array}{cc}D^t & -B^t\\ B^t & D^t\end{array}\right).
\end{equation}
This matrix commutes with $J$ and so  gives rise to a nondegenerate complex linear map in~$\cx^2$. 
Let $$\Sigma'=\psi\circ \phi(\Sigma),$$ and $$M'= (\psi\circ \phi)(M).$$ 

The differential at the origin of the 
composition $\psi \circ \phi$ is given by
\begin{equation}\label{e:differen}
D(\psi\circ\phi)(0)=
\left(\begin{array}{cc}I_2 & 0\\ E & G\end{array}\right),
\end{equation}
where we used identities \eqref{e:symp_id} to simplify the
matrix. Further, a direct calculation shows that 
\begin{equation}
\label{e:G}
G = ( g_{kj}) = B^t B + D^t D,
\end{equation}
and therefore, the matrix $G$ is symmetric with positive entries in the main 
diagonal. The determinant
\begin{equation}
\label{e:det}
\Delta = g_{11}g_{22} - g_{12}^2
\end{equation}
of $G$ coincides with that of the matrix in \eqref{e:psi} corresponding to a $\cx$-linear map of $\cx^2$. 
Hence  $\Delta$ is also positive. Let  $\rho' = \rho \circ (\psi \circ \phi)^{-1}$, and
\begin{equation}\label{e:omega}
\Omega' = \{(z',w')\in \cx^2 : \rho'(z',w') < 0\}.
\end{equation}
It follows from \eqref{e:M} and \eqref{e:differen} that 

\begin{equation}
\label{e:ro}
\rho'(z',w') = {x'}^2 + \frac{9}{4}{u'}^2 + o(\vert (z',w') \vert^2).
\end{equation} 
In particular,  the function $\rho'$ is 
strictly plurisubharmonic in a neighbourhood of the origin, and the hypersurface $M'$ is strictly pseudoconvex in 
a punctured neighbourhood of the origin.

\begin{lemma}\label{l:1}
The polynomial hull of the set $\mathbb B(0,\e)\cap \Sigma'$ for sufficiently small
$\e>0$ is contained in $\overline{\Omega' \cap \mathbb B(0,\e)}$.
\end{lemma}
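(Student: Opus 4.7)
The plan is to trap the polynomial hull on the pseudoconvex side of $M'$ by invoking the maximum principle for plurisubharmonic functions on polynomial hulls. Let $K = \overline{\mathbb B(0,\varepsilon)} \cap \Sigma'$; this is a compact subset of $M' = \{\rho'=0\}$, and denote its polynomial hull by $\widehat K$. The key conceptual obstruction is that $\rho'$ is only defined and strictly plurisubharmonic on a fixed neighborhood $\mathbb B(0,\varepsilon_0)$ of the origin, whereas the maximum principle for $\widehat K$ requires a globally plurisubharmonic function; we will therefore patch $\rho'$ into a global psh function.

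First, applying the psh maximum principle to the globally plurisubharmonic function $|z|^2$ (its complex Hessian is the identity) together with $\sup_K |z|^2 \le \varepsilon^2$ immediately yields $\widehat K \subset \overline{\mathbb B(0,\varepsilon)}$. To extend $\rho'$ globally, fix $\varepsilon_0 > \varepsilon$ so that $\rho'$ is strictly psh on $\mathbb B(0,\varepsilon_0)$, and set
\[
\tilde\rho := \max\bigl(\rho',\, A|z|^2 - B\bigr) \ \text{on } \mathbb B(0,\varepsilon_0),\qquad \tilde\rho := A|z|^2 - B \ \text{on } \mathbb C^2 \setminus \mathbb B(0,\varepsilon_0),
\]
where $B$ is chosen large enough that $A\varepsilon^2 - B < \min_{\overline{\mathbb B(0,\varepsilon)}} \rho'$, and then $A$ is chosen large enough that $A\varepsilon_0^2 - B > \max_{\overline{\mathbb B(0,\varepsilon_0)}} \rho'$. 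Since $\varepsilon < \varepsilon_0$, these two conditions are compatible. Under these choices, $\tilde\rho$ coincides with $\rho'$ on $\overline{\mathbb B(0,\varepsilon)}$ and with $A|z|^2 - B$ near $\partial \mathbb B(0,\varepsilon_0)$, so the piecewise definition is consistent and produces a plurisubharmonic function on all of $\mathbb C^2$ (as a max of two psh functions on $\mathbb B(0,\varepsilon_0)$, and as the smooth psh function $A|z|^2-B$ outside).

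With $\tilde\rho$ in hand, $K \subset \{\rho' = 0\}$ gives $\tilde\rho|_K \equiv 0$, so the psh maximum principle yields $\tilde\rho \le 0$ on $\widehat K$. Combined with $\widehat K \subset \overline{\mathbb B(0,\varepsilon)}$, where $\tilde\rho = \rho'$, we conclude $\widehat K \subset \overline{\mathbb B(0,\varepsilon)} \cap \{\rho' \le 0\}$. A short continuity argument identifies this with $\overline{\Omega' \cap \mathbb B(0,\varepsilon)}$: $\rho'$ is a smooth defining function for $M'$ away from the origin (so every $M'$ point is a limit of $\Omega'$ points), and the origin itself lies in $\overline{\Omega'}$, as is visible from the explicit form of $\rho$, which is negative on $\{x=u=0,\,y>0\}$. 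The only real work is the bookkeeping in the construction of $\tilde\rho$; the conceptual content is that the global psh maximum principle is enough to confine the hull to the strictly pseudoconvex side of $M'$.
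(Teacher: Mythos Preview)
Your proof is correct and rests on the same underlying idea as the paper's: the polynomial hull is trapped by sublevel sets of plurisubharmonic functions. The paper dispatches this in one line by quoting the classical result (H\"ormander) that on a ball the polynomial hull of a compact set coincides with its hull with respect to functions plurisubharmonic on that ball; since $\rho'$ is strictly psh on $\mathbb B(0,\e)$ and vanishes on $K$, the conclusion follows at once. You instead globalize $\rho'$ to a psh function on all of $\cx^2$ via an explicit $\max$-patching and apply the global psh maximum principle directly. Your route is more self-contained (it avoids citing the local-hull theorem) at the cost of the bookkeeping; the paper's is shorter because it outsources that work. Two minor points: the order in which you fix $A$ and $B$ is stated backwards---one should first take $A$ large so that $A(\e_0^2-\e^2)$ exceeds the oscillation of $\rho'$, and then pick $B$---though you correctly observe the two conditions are compatible; and your use of ``$|z|^2$'' for the full squared norm clashes with the paper's convention, where $z$ denotes a single coordinate of $\cx^2$.
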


\begin{proof} Choose $\e>0$  small enough such that $\rho'$ is strictly plurisubharmonic in $\mathbb B(0,\e)$. 
The polynomially convex hull of $\mathbb B(0,\e)\cap \Sigma'$ is contained in $\overline{\mathbb B(0,\e)}$. 
By a classical result (see, for instance, ~\cite{H}), the polynomially convex hull of 
$\overline{\mathbb B(0,\e)\cap \Sigma'}$ coincides with its hull with respect to the family of  functions 
plurisubharmonic in $\mathbb B(0,\e)$. Since for any point $p$ in 
$\overline{\mathbb B(0,\e)}\setminus \overline\Omega'$, we have $\rho'(p)>0$, the assertion of the lemma 
follows.
\end{proof}

\section{Characteristic foliation and polynomial convexity} 

In this section we explain the strategy of the proof of Theorem~\ref{t:ga}.

\subsection{Characteristic foliation.} Let $X$ be  a totally real surface embedded into  a real hypersurface $Y$ in $\cx^2$.  
Define on $X$ a field of lines determined at every  $p \in X $ by  
$$
L_p = T_p X \cap H_p Y,
$$ 
where $H_p Y = T_p Y \cap J(T_p Y)$ denotes the complex tangent line to $Y$ at the point $p$ and $J$ denotes the standard complex structure of $\cx^2$. Integral 
curves, i.e., curves which are tangent to $L_p$ at each point $p$, of this line field define a foliation on 
$X$. It is called the {\it characteristic foliation} of $X$.

We consider the characteristic foliation of $\Sigma\setminus\{0\} \subset M$ and 
$(\psi \circ \phi)(\Sigma)\setminus\{0\} \subset (\psi\circ\phi)(M)$.  
Characteristic foliations are invariant under biholomorphisms. Therefore, in order to study the characteristic foliation on $\phi(\Sigma)$ with respect to $\phi(M)$, it is sufficient to study the characteristic foliation 
of $\Sigma' = \psi \circ \phi (\Sigma)$ with respect to $M'$.

Recall that a {\it rectifiable arc} is a homeomorphic image of an interval under a Lipschitz map. 
Our ultimate goal is to prove the following. 

\begin{prop}\label{l:2}
There exist $\e > 0$ small enough and two rectifiable arcs $\gamma_1$ and $\gamma_2$ in  
$\Sigma' \cap \mathbb B (0,\e)$ passing through the origin with the following properties:
 \begin{itemize}
 \item[(i)]  $\gamma_j$ are smooth at all points except, possibly, the origin;
 \item[(ii)] $\gamma_1 \cap \gamma_2 = \{ 0 \}$;  
 \item[(iii)] if $K$ is a  compact subset of $\Sigma' \cap \mathbb B (0,\e)$ and is not contained in
$\gamma_1 \cup \gamma_2$, then there exists a leaf $\gamma$ of the characteristic 
foliation on $\Sigma'$ such that  $K\cap \gamma \ne \varnothing$ but $K$ does 
not meet both sides of $\gamma$.
\end{itemize}
\end{prop}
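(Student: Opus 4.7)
The plan is to take $\gamma_1$ and $\gamma_2$ to be the two separatrices of what will turn out to be a topological saddle point of the characteristic foliation at the origin. Pulling the characteristic foliation on $\Sigma'\setminus\{0\}$ back to the parameter plane $\rl^2_{(t,s)}$ via the Whitney map $\pi$ (which is a diffeomorphism away from $0$), one describes it as the foliation by trajectories of a smooth real vector field $X$ with an isolated zero at the origin. The real content of the later sections, which I would take here as a black box, is that under the genericity assumption $0$ is a stationary point of $X$ whose local phase portrait is topologically conjugate to the standard linear saddle $\dot a = a$, $\dot b = -b$. This supplies a homeomorphism $h$ from a neighbourhood of $0$ in $\Sigma'$ to a neighbourhood of the origin in $\rl^2_{(a,b)}$ sending the characteristic foliation to the foliation by level curves of $H(a,b) = ab$. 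I would then declare $\gamma_1 := h^{-1}(\{a = 0\})$ and $\gamma_2 := h^{-1}(\{b = 0\})$ (suitably truncated). Property (ii) is immediate; (i) and the rectifiability of the $\gamma_j$ follow from the same analysis that produces the saddle normal form, since the separatrices are obtained as smooth integral curves of $X$ approaching $0$ along definite tangent directions (the analogues of the eigen-directions of the linearised saddle), hence of finite length and locally Lipschitz up to the origin.

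To deduce (iii), I would set $\widetilde H := H \circ h$, a continuous first integral of the characteristic foliation that vanishes exactly on $\gamma_1 \cup \gamma_2$. Given a compact $K \subset \Sigma' \cap \mathbb B(0,\e)$ not contained in $\gamma_1 \cup \gamma_2$, at least one of $c_{\max} := \max_K \widetilde H$ and $c_{\min} := \min_K \widetilde H$ is nonzero; I assume $c_{\max} > 0$, the other case being symmetric. Choose $p \in K$ with $\widetilde H(p) = c_{\max}$ and let $\gamma$ be the leaf of the characteristic foliation through $p$. Transporting via $h$, the leaf $\gamma$ corresponds to the single hyperbola branch of $\{ab = c_{\max}\}$ through $h(p)$, lying in one of the quadrants where $H > 0$. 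This branch disconnects the model neighbourhood into two components: the ``outer'' one where $ab > c_{\max}$, and the ``inner'' one that contains the origin, both separatrix axes, and the opposite-sign quadrants. Since $\widetilde H \le c_{\max}$ on $K$, the image $h(K)$ entirely avoids the outer component, so $K$ meets $\gamma$ but lies on a single side of it, which is precisely (iii).

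The main obstacle, entirely deferred to Sections~4--7, is the black-box input: the existence of the topological saddle normal form $h$ together with smooth, rectifiable separatrices $\gamma_1, \gamma_2$. The vector field $X$ has a highly degenerate zero at the origin---Section~4 computes its $5$-jet and shows that classical linearisation theorems do not apply. The natural approach is a weighted desingularisation governed by the Newton diagram, reducing the picture to elementary hyperbolic singularities whose separatrices can then be tracked back to $\Sigma'$. Real analyticity of $\phi$ is used crucially here to rule out oscillatory trajectories with spurious accumulation in the blow-up, and the genericity hypothesis on the $2$-jet ensures that after desingularisation one obtains a saddle rather than a centre, focus, or more degenerate configuration. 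Once this structural input is secured, the elementary extremal-value argument above completes the proof of Proposition~\ref{l:2} with no additional analytic work.
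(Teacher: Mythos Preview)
Your proposal is correct and follows the paper's approach: establish (via Sections~4--7) that the pulled-back characteristic foliation has a topological saddle at the origin, take the separatrices as $\gamma_1,\gamma_2$, and deduce (iii) from the hyperbolic structure of the leaves. Your extremal-value argument via the first integral $\widetilde H$ makes explicit what the paper compresses into one line at the end of Section~7; the only small inaccuracy is that in the generic case the separatrices are the parabolic arcs $s=c_\pm t^2$, which approach the origin along the \emph{same} tangent direction rather than two distinct eigen-directions, though this does not affect rectifiability or the rest of your argument.
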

We point out that by (i) and (ii)  the union $\gamma_1 \cup \gamma_2$ does not bound any subdomain with the closure 
compactly contained in $\Sigma' \cap \mathbb B(0,\e)$.

The proof of the proposition will be given in Sections~\ref{s:reduction} - \ref{s:gen-hyp}.  Considering  pull-back of the 
characteristic foliation by $\psi \circ \phi \circ \pi$ we 
obtain a smooth vector field in a neighbourhood of the origin in $\rl^2_{(t,s)}$ with the stationary point at the origin.  
The study of its integral curves is based on the local theory of dynamical systems and can be read independently from the 
rest of the paper.

Assuming Proposition~\ref{l:2} we now prove our main results. The proof is based on  the  argument due to  
J.~Duval \cite{D} and B.~J\"oricke \cite{Jo1,Jo2}. Suppose that $\phi$ satisfies the assumptions of Theorem~\ref{t:ga}, 
and $\Sigma' = (\psi \circ\phi)(\Sigma)$. First we establish nonexistence of holomorphic discs attached to $\Sigma'$ near 
the Whitney umbrella. In what follows we denote by $\Delta$ the unit disc of  $\cx$. By a holomorphic disc we mean a  
map $f:\Delta \to \cx^2$ holomorphic in $\Delta$ and continuous on $\bar\Delta$. As usual, by its
boundary we mean the restriction $f\vert_{\partial\Delta}$; we identify it with its image $f(\partial \Delta)$. 

\begin{cor}
\label{disc-cor}
There exists $\delta > 0$ with the following property: a holomorphic disc 
$f:\Delta \to \mathbb B(0,\delta)$ with the boundary attached to $\Sigma'$, i.e., 
satisfying $f(\partial\Delta) \subset \Sigma'$, is constant.
\end{cor}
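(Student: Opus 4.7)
The plan is to assume toward contradiction that $f$ is nonconstant and use Proposition~\ref{l:2} to derive a contradiction, once the boundary $f(\partial\Delta)$ has been shown to be transverse to the characteristic foliation on $\Sigma'\setminus\{0\}$. I would choose $\delta\le\e$ small enough that both Lemma~\ref{l:1} and Proposition~\ref{l:2} apply inside $\mathbb B(0,\delta)$ and that $\rho'$ is strictly plurisubharmonic there.

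The first step is to establish transversality via the Hopf boundary lemma. Since $\Sigma'\subset M'=\{\rho'=0\}$, the function $u:=\rho'\circ f$ is subharmonic on $\Delta$, continuous on $\bar\Delta$, and vanishes on $\partial\Delta$. The maximum principle yields $u\le 0$ on $\Delta$; strict plurisubharmonicity of $\rho'$ together with the nonconstancy of $f$ upgrades this to $u<0$ on $\Delta$, so $f(\Delta)\subset\Omega'$. Applying the Hopf lemma at an arbitrary $\zeta_0\in\partial\Delta$ produces a strictly positive outward normal derivative of $u$ at $\zeta_0$. If $p:=f(\zeta_0)$ and $v$ denotes the tangent vector to $f(\partial\Delta)$ at $p$, the Cauchy--Riemann equations identify $f_*$ of the outward unit normal with $-Jv$, so the positivity forces $d\rho'(p)(Jv)\ne 0$. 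Since $d\rho'(p)$ vanishes on $T_pM'$, this means $Jv\notin T_pM'$; combined with $v\in T_p\Sigma'\subset T_pM'$ and the definition $L_p=T_p\Sigma'\cap H_pM'$ with $H_pM'=T_pM'\cap J(T_pM')$, we conclude $v\notin L_p$, establishing transversality of $f(\partial\Delta)$ to the characteristic foliation at every point of $\Sigma'\setminus\{0\}$.

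Next I would apply Proposition~\ref{l:2} to the connected compact set $K:=f(\partial\Delta)\subset\Sigma'\cap\mathbb B(0,\delta)$, which contains more than one point since $f$ is nonconstant. If $K\not\subset\gamma_1\cup\gamma_2$, part~(iii) furnishes a leaf $\gamma$ meeting $K$ while keeping $K$ on one side; because regular leaves avoid the singular point, any point of $K\cap\gamma$ lies in $\Sigma'\setminus\{0\}$, and the transversality established above forces $K$ to cross $\gamma$ there, contradicting the one-sidedness. In the remaining case $K\subset\gamma_1\cup\gamma_2$, the connectedness of $K$ together with the tree-like topology of $\gamma_1\cup\gamma_2$ forces $K$ to contain a non-degenerate sub-arc of some $\gamma_j$; since $\gamma_j\setminus\{0\}$ is a smooth integral curve of the characteristic foliation (the separatrix of the saddle singularity produced by the dynamical system on $\rl^2_{(t,s)}$), $K$ is tangent to leaves along this sub-arc, again contradicting transversality.

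The principal obstacle is that neither the characteristic foliation nor the transversality statement is defined at the origin; the argument succeeds precisely because both contradictions can be localized away from $0$, thanks to the explicit structure of $\gamma_1\cup\gamma_2$ and the fact that regular leaves are disjoint from the singular point. A secondary technical matter is ensuring enough boundary regularity of $f$ to invoke the Hopf lemma and the tangent-vector computation at every non-origin boundary point, which follows from standard boundary regularity for holomorphic discs with totally real boundary in $\Sigma'\setminus\{0\}$.
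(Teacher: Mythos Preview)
Your argument is correct and follows the same overall architecture as the paper's proof: the maximum principle places $f(\Delta)$ in $\Omega'$, boundary regularity at totally real points plus the Hopf lemma give transversality of $f(\partial\Delta)$ to the characteristic foliation at every non-origin boundary point, and Proposition~\ref{l:2}(iii) then yields a contradiction once one knows $K=f(\partial\Delta)$ is not contained in $\gamma_1\cup\gamma_2$.

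The genuine difference lies in how you dispose of the case $K\subset\gamma_1\cup\gamma_2$. The paper treats this via geometric measure theory: it views $V=f(\Delta)$ as a $1$-dimensional analytic set with rectifiable boundary $bV\subset\gamma_1\cup\gamma_2$, invokes the results of Harvey--Chirka on $d[V]=-[bV]$, and derives a contradiction from the fact that $bV$, being a union of subarcs with endpoints, cannot be a closed current. Your route is more elementary: you use the additional structural fact (established in Sections~\ref{s:gen-hyp-s}--\ref{s:gen-hyp} but not recorded in the statement of Proposition~\ref{l:2}) that the arcs $\gamma_j\setminus\{0\}$ are themselves separatrices, hence leaves of the characteristic foliation, so that any non-degenerate subarc of $K$ inside $\gamma_j\setminus\{0\}$ would be tangent to the foliation, contradicting the Hopf-lemma transversality. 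Your approach avoids currents entirely, at the cost of importing information from the \emph{proof} of Proposition~\ref{l:2} rather than just its statement; the paper's approach works from Proposition~\ref{l:2} as a black box but needs the Harvey--Chirka machinery. Both are valid; yours is arguably cleaner once one is willing to use the explicit description of the $\gamma_j$.
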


Before we proceed with the proof, we recall some basic notions. Let  $U \subset \rl^n$ be a domain and $N$ be a real 
submanifold of dimension $d$ in $U$. As usual, denote by   ${\mathcal D}(U)$  the space of  test-functions on $U$.   
The {\it current of integration} $[N]$ corresponding to $N$ is a continuous linear form on the space ${\mathcal D}^{d}(U)$ 
of differential forms of degree~$d$ with coefficients in ${\mathcal D}(U)$ defined by 
\begin{eqnarray}
\label{CI}
[N](\psi) = \int_N \psi, \, \, \, \forall \psi \in {\mathcal D}^d(U).
\end{eqnarray}
The current $[N]$ may be well-defined even when $N$ has some singularities provided that the behaviour of $N$ near its 
singular locus is not too bad. For instance, the current of integration over a complex analytic set or a rectifiable curve is well-defined, see \cite{Ch,Fe,Ha,St}. The exterior derivative $d[N]$ is then defined by duality: $d[N](\psi): = (-1)^{n-d+1}[N](d\psi)$. 

\begin{proof} Let $\e > 0$ be given by Proposition \ref{l:2}. Without loss of generality we may assume that~$\e$ is such 
that the function $\rho'$ in (\ref{e:ro}) is strictly plurisubharmonic in the ball $\mathbb B(0,2\e)$. Set $\delta = \e/2$. 
Suppose that there exists a nonconstant holomorphic disc $f: \Delta \to \mathbb B(0,\delta)$  with boundary glued to $\Sigma'$.     The function $\rho' \circ f$ is subharmonic in the unit disc, and so the maximum principle implies that $f(\Delta)$ is contained in 
$\Omega'=\{\rho' < 0 \}$. The proof consists of two parts. 

(1) First we 
show that the boundary of $f$ is not contained in $\gamma_1 \cup \gamma_2$. Arguing by contradiction, assume that 
$f(\partial\Delta) \subset \gamma_1 \cup \gamma_2$. The image $V:= f(\Delta)$ is a complex 1-dimensional 
analytic subset of $\Omega'$  and its boundary $bV:= \overline V \backslash V$  is contained in $\gamma_1 \cup \gamma_2$. 
Since the arcs $\gamma_j$ are rectifiable, it follows by the well-known results \cite{Ch,Ha,St}  that  two cases can occur.  
The first possibility is that the closure $\bar V$ is a complex $1$-dimensional analytic subset of $\cx^2$ contained in 
$\mathbb B(0,\e)$. This is impossible since a  closed complex analytic subset of positive dimension can not be compactly contained 
in $\cx^2$ (e.g.,~\cite{Ch}). The second case is when the area of $V$ is bounded, $V$ defines the current of integration $[V]$ on 
$\cx^2$,  and $d[V] = -[bV]$ in the sense of currents. Since $d^2 = 0$ for currents,  the current $[bV]$ is closed, i.e., 
$d[bV](\psi) = 0$ for all $\psi \in {\mathcal D}(\cx^2)$. Furthermore, there exists a closed subset $E$ in $bV$ of the Hausdorff 
1-measure  0  such that the couple $(V,bV)$ is a complex manifold with boundary in a neighbourhood of every point in $bV \setminus E$. Then $bV$ is the union of  closed subarcs  of the arcs $\gamma_j$. In particular, $bV$ is not a closed curve and has nonempty boundary in $\cx^2$. Let $p$ be a boundary point of $bV$ and  $U$  be   a sufficiently small neighbourhood of $p$ such that 
$U \cap bV$ is an arc in $U$ with the end $p$. Considering test-forms $\psi \in {\mathcal D}^1(U)$, we conclude  by Stokes' formula that $d[bV] \neq 0$ in $\cx^2$ since the Dirac mass at $p$ appears in the exterior derivative:  a contradiction.

(2) 
By the uniqueness theorem the set of points $f^{-1}(0)$ has measure zero on the unit 
circle. Since $\Sigma'$ is totally real outside the origin, it follows by the boundary regularity 
theorem~\cite{Ch} that $f$ is smooth (even real analytic) up to the boundary outside the
pull-back $f^{-1}(0)$. Applying the Hopf lemma (see, for instance, \cite {Ra}) to the 
subharmonic function $\rho' \circ f$ on $\Delta$ we conclude that $f$ is transverse to the
hypersurface $M'$ at every point different from the origin. Therefore, the
complex line tangent to $f(\Delta)$ at a boundary point is transverse to the
tangent complex line of $M'$ at this point. In particular,
the boundary $K: = f(\partial\Delta)$ is transverse to the leaves of the characteristic
foliation of $\Sigma'$. This contradicts Proposition~\ref{l:2}.
\end{proof}

\subsection{Sweeping out the envelope by analytic curves.} 
Given a compact set $K$, we denote by $\widehat K$ its polynomially convex hull. We also recall two useful related notions. 
The {\it essential hull} $K^{ess}$ of $K$ is defined by 
$$K^{ess} = \overline{\widehat K \setminus K},$$
and the {\it trace} $K^{tr}$ of $K^{ess}$ is  the intersection
$$K^{tr}= K^{ess}\cap K.$$
A local maximum principle of Rossi \cite{R,St} states that if $K$ is  a compact set in $\cx^n$, 
$E\subset \widehat K$ is compact, $U$ is an open subset of $\cx^n$ that contains $E$, and
if $f\in \mathcal O(U)$, then $||f||_E = ||f||_{(E\cap K)\cup \partial E}$, where the boundary 
of $E$ is taken with respect to $\widehat K$. By choosing $E=K^{ess}$ and 
$U=\cx^2$ we see that  ${K^{ess}}$ is contained in $\widehat{K^{tr}}$. Therefore, 
to prove that $K$ is polynomially convex, it is enough to show that $K^{tr}$ is empty. 

Let $$X=\Sigma' \cap \overline{\mathbb B(0,\e)}.$$
Then $X$ is a  closed disc, and the punctured disc $X \setminus \{ 0 \}$ is real analytically and total really embedded into  
$\partial \Omega' \setminus \{ 0 \}$, where $\Omega'$ is given by \eqref{e:omega}, and $\e$ is such that 
Lemma~\ref{l:1} holds.

\begin{prop}\label{duval}
The essential hull $X^{ess}$ cannot intersect a leaf of a characteristic foliation at a totally real point of $X$ without crossing it.
\end{prop}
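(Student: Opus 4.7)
The plan is to argue by contradiction, combining the analytic structure of the polynomial hull near $p$ with the Hopf lemma and an elementary topological obstruction, in the spirit of Duval~\cite{D} and J\"oricke~\cite{Jo1, Jo2}. Suppose for contradiction that $p \in X^{ess}$ is a totally real point of $X$ lying on a leaf $\gamma$ of the characteristic foliation and that $X^{ess}$ does not cross $\gamma$ at $p$. One would first fix real analytic local coordinates $(s, t)$ on $X$ near $p$ in which $p$ corresponds to $(0,0)$ and $\gamma = \{t = 0\}$; the non-crossing hypothesis then reads $X^{tr} \cap U \subset \{t \geq 0\}$ for some small neighbourhood $U$ of $p$ in $\cx^2$, chosen so that $\overline{U}$ avoids the Whitney-umbrella singularity $0 \in \Sigma'$, at which $M'$ fails to be smooth.

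The first (and main) step is to produce a small complex analytic curve in $\widehat X \setminus X$ close to $p$. Since $p \in X^{ess} = \overline{\widehat X \setminus X}$, there exists a sequence $q_n \to p$ with $q_n \in \widehat X \setminus X$. By the Oka characterization theorem~\cite{O} as refined by Stolzenberg~\cite{S}, a neighbourhood of each $q_n$ in $\widehat X \setminus X$ carries the structure of a $1$-dimensional complex analytic subset of $\cx^2 \setminus X$; let $V_n$ denote an irreducible branch through $q_n$. By Lemma~\ref{l:1}, $\overline{V_n} \subset \overline{\Omega'}$, and since $\rho'$ is strictly plurisubharmonic near $p$, a Schwarz-type estimate based on the local holomorphic peak function for $\overline{\Omega'}$ at $p$ forces the component of $V_n$ through $q_n$ to shrink to $\{p\}$ as $q_n \to p$. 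Thus for $n$ sufficiently large one can take $V := V_n$ with $\overline{V} \subset U$, so that $bV := \overline{V} \setminus V$ is a rectifiable $1$-set contained in $X \cap U$, and in fact in $X^{tr} \cap U \subset \{t \geq 0\}$. Together with $d[V] = -[bV]$ and $d^2 = 0$, this yields $d[bV] = 0$ in $\cx^2$.

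The next step is to derive transversality of $bV$ to the characteristic foliation, exactly as in the proof of Corollary~\ref{disc-cor}. At every smooth point $q' \in bV$, parametrize a branch of $V$ near $q'$ by a holomorphic disc $f : \Delta \to V \cup bV$ with $f(\zeta_0) = q'$ for some $\zeta_0 \in \partial \Delta$. Then $\rho' \circ f$ is subharmonic on $\Delta$, vanishes on $\partial\Delta$, and is strictly negative in $\Delta$ (otherwise $V$ would lie in the strictly pseudoconvex hypersurface $M'$), so by the Hopf lemma its outward normal derivative at $\zeta_0$ is strictly positive. This forces the real tangent to $bV$ at $q'$ to lie outside the characteristic direction $T_{q'} X \cap H_{q'} M'$. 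Combining $d[bV] = 0$ with the standard structure theorem for boundaries of $1$-dimensional analytic sets (cf.~\cite{Ch, Ha, St}), one sees that, outside a set of Hausdorff $1$-measure zero, $bV$ is a disjoint union of smooth simple closed loops in $X \cap U$. Pick one such loop $C \subset X^{tr} \cap U \subset \{t \geq 0\}$ and parametrize it smoothly by $\theta \in S^1$ as $C(\theta) = (s(\theta), t(\theta))$. The continuous function $t : S^1 \to [0, +\infty)$ attains its minimum at some $\theta_0 \in S^1$, and the one-sided inequality $t \geq 0$ combined with smoothness of $C$ at $\theta_0$ forces $t'(\theta_0) = 0$. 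Consequently $C'(\theta_0) = (s'(\theta_0), 0)$ is horizontal and thus tangent to the characteristic foliation at $(s(\theta_0), t(\theta_0))$, contradicting the transversality just established.

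The hard part of the argument is the extraction of the small curve $V$: it requires both the Oka-Stolzenberg structure theorem to produce the analytic branch through $q_n$, and a Schwarz-type estimate for the size of analytic subsets of $\overline{\Omega'}$ accumulating at a smooth strictly pseudoconvex boundary point to guarantee that the relevant branch localizes near $p$. Once these ingredients are in place, the Hopf lemma and the interior-minimum argument on the closed loop $C$ close out the proof in a clean and essentially combinatorial way.
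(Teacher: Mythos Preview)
Your argument has a genuine gap at the very first step. You assert that ``by the Oka characterization theorem as refined by Stolzenberg, a neighbourhood of each $q_n$ in $\widehat X \setminus X$ carries the structure of a $1$-dimensional complex analytic subset of $\cx^2 \setminus X$.'' This is not what Oka's theorem or Stolzenberg's work says. Stolzenberg's structure theorem (the one cited later in the paper) applies to hulls of sets of the form $Y \cup \Gamma$ where $\Gamma$ is a compact connected set of \emph{finite length}; here $X$ is a $2$-dimensional real surface, and there is no general result guaranteeing analytic structure in $\widehat X \setminus X$. In fact there are well-known examples (Stolzenberg, Wermer) of polynomial hulls with no analytic structure whatsoever. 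So the analytic curve $V$ you want to extract simply need not exist, and everything downstream (the Schwarz-type localization, the Hopf-lemma transversality on $bV$, and the minimum-of-$t$ argument) collapses with it.

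The paper's proof proceeds quite differently, precisely to avoid this trap. Rather than trying to find analytic structure in the hull, it constructs by hand two continuous one-parameter families $\{V_t\}$ and $\{W_t\}$ of analytic curves (complex lines in a local chart in which $\Omega' \cap U'$ is strictly convex) that meet $X$ transversely at $p$ with opposite intersection signs, stay off $X^{tr}$ for $t>0$, and eventually exit $\overline{\Omega'}$. These families are built from the characteristic foliation itself: one takes points $x,y$ on the leaf $\gamma$ preceding and succeeding $p$, moves them to a nearby leaf $\gamma'$ disjoint from $X^{ess}$ (this is where the non-crossing hypothesis enters), and uses the discs $\Delta(x',\alpha(t))$, $\Delta(\beta(t),p')$, etc. Duval's key Lemma~\ref{l:d} then shows, via Oka's characterization theorem applied to these explicit families, that $p \notin X^{tr}$, a contradiction. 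The Hopf-lemma/transversality idea you invoke does appear in the paper, but only in Corollary~\ref{disc-cor}, where one already has a genuine holomorphic disc; for the essential hull one must use the Oka sweeping argument instead.
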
 

This result is due to J.~Duval \cite{D} (see also B.~J\"oricke \cite{Jo1}) in the case where a totally real disc is contained in 
the boundary of a smoothly bounded    strictly pseudoconvex domain of $\cx^2$. A detailed exposition of the proof is contained 
in \cite{St}. The proof, which is an application of Oka's method (developed also by G.Stolzenberg \cite{S}), is purely local 
and works without any essential modification in our case where $\partial \Omega'$ admits an isolated singularity at the origin. 
For reader's convenience we sketch the main steps of this construction. 

\medskip

{\it Step 1. Oka's Characterization Theorem.} We will state all results for dimension 2 because we deal with this case only; 
for more general versions see \cite{St, S}. 

 Let $U \subset O$ be two open subsets of $\cx^2$. Let $F: [0,1] \times U \to \cx$ be a continuous function that  
 for every $t \in [0,1]$ defines a nonconstant holomorphic function $f_t: = F(t,\bullet)$ on $U$. The zero locus 
 of  $f_t$, 
 $$
 V_t := \{ p \in U: f_t(p) = 0 \},\ \ t \in [0,1],
 $$ 
is a  purely 1-dimensional complex analytic subset of $U$. Suppose that every $V_t$ is also closed in $O$. Then  we call 
$V_t$  {\it an analytic curve in $O$} and call  $\{ V_t \}_{t \in [0,1]}$ a {\it continuous family of analytic curves} in $O$. 
The classical version of Oka's method is the following (see \cite{St}):

\medskip

\noindent{\bf Oka's Characterization Theorem.} {\it Let $K$ be a compact subset of $\cx^2$ and $O$ be a neighbourhood of 
$\widehat K$. If $\{ V_t \}$ is a continuous family of analytic curves in $O$ such that $V_0$ intersects 
$\widehat K$, but $V_1$ does not, then some $V_t$ must intersect $K$.}

\medskip

Many various versions of this fundamental principle are known. For us the following criterion is useful (cf. \cite{Du1}):
{\it Let $\{ V_t \}_{t \in [0,1]}$ be a continuous family of analytic curves in a neighbourhood $O$ of 
$\overline{\Omega' \cap \mathbb B(0,\e)}$  such that for all $t$ the curves $V_t$ do not intersect $X^{tr}$ and $V_1$ 
does not intersect $\overline \Omega'$. Then the curves $V_t$ do not intersect $X^{ess}$.}

Indeed, since the essential hull $X^{ess}$ is contained in $\widehat{X^{tr}}$ by Rossi's local maximum principle and $\widehat{X^{tr}}$ is contained in $\overline{\Omega' \cap \mathbb B(0,\e)}$ by Lemma~\ref{l:1}, it suffices to apply Oka's theorem.

The first step of the construction is the following key technical tool of \cite{D}:

\begin{lemma}\label{l:d} 
Let $p\in X\setminus \{0\}$ be an arbitrary point. Then $p$ does not lie in $X^{tr}$ if  there exist two
  continuous families $\{V_t\}_{t\in[0,1]}$ and $\{W_t\}_{t\in[0,1]}$
  of analytic curves in an open neighbourhood $O$ of $\overline{\Omega' \cap \mathbb B(0,\e)}$
with the following properties: 
\begin{itemize}
\item[(i)] $V_0$ and $W_0$ meet $X$ transversely at $p$ and with opposite signs of intersection;
\item[(ii)] for $t>0$, the varieties $V_t$ and $W_t$ are disjoint from $X^{tr}$; 
\item[(iii)] $V_1$ and $W_1$ do not intersect $\overline{\Omega'}$.
\end{itemize}
\end{lemma}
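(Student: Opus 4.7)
The plan is to apply the Oka-type criterion stated just above the lemma to a single one-parameter family of analytic curves obtained by smoothing the product of $V_t$ and $W_t$ near the node at $p$. Assume toward a contradiction that $p \in X^{tr}$. Choose holomorphic functions $f_t, g_t$ on an open neighbourhood $O$ of $\overline{\Omega' \cap \mathbb B(0,\e)}$ whose zero loci define the given families, and consider the two-parameter family of analytic curves
\[
C_{t,s} \;=\; \{f_t \cdot g_t = s\}, \qquad (t,s) \in [0,1] \times \cx.
\]
At $(t,s) = (0,0)$ we recover the reducible curve $V_0 \cup W_0$, which by hypothesis (i) has a simple node at $p$ where both branches meet the totally real surface $X$ transversely with opposite signs of intersection.

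The key geometric input is that for fixed small $s \ne 0$ the curve $C_{0,s}$ is a smoothing of this node, and the opposite-sign hypothesis guarantees that the local intersection number of $C_{0,s}$ with $X$ near $p$ is zero; consequently, choosing $s_0 > 0$ small enough, $C_{0,s_0}$ is disjoint from $X$ in a neighbourhood $B$ of $p$. Away from $p$ the curves $V_0$ and $W_0$ are, by transversality, locally disjoint from $X$; combined with the fact that $X^{tr}$ is closed, one obtains (after possibly shrinking $s_0$) that $C_{0,s_0}$ is globally disjoint from $X^{tr}$. Similarly, for each $t \in (0,1]$ hypothesis (ii) gives that $V_t \cup W_t$ is disjoint from $X^{tr}$, so for $s_0$ sufficiently small $C_{t,s_0}$ is disjoint from $X^{tr}$ as well. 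Finally, $C_{1,0} = V_1 \cup W_1$ misses $\overline{\Omega'}$ by (iii), so by continuity we may further interpolate $s$ from $s_0$ down to $0$ along the $t = 1$ slice while remaining outside $\overline{\Omega'}$. Concatenating, we obtain a continuous family $\{U_\tau\}_{\tau \in [0,1]}$ of analytic curves in $O$ with $U_\tau \cap X^{tr} = \varnothing$ for all $\tau$ and $U_1 \cap \overline{\Omega'} = \varnothing$. By the Oka criterion stated above the lemma, each $U_\tau$ is disjoint from $X^{ess}$; in particular, $C_{0,s_0} \cap X^{ess} = \varnothing$.

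Letting $s$ range over a small punctured neighbourhood of $0$ in $\cx$, the curves $\{f_0 g_0 = s\}$ foliate a full punctured neighbourhood of the node $V_0 \cup W_0$, so the same argument shows $X^{ess}$ is disjoint from this entire punctured neighbourhood of $V_0 \cup W_0$ near $p$. Since $X^{ess}$ is closed and $V_0 \cup W_0$ meets $X$ only at $p$ in a neighbourhood, one concludes that $X^{ess}$ cannot accumulate at $p$ from points off of $V_0 \cup W_0$; using again the transversality of $V_0$ and $W_0$ to $X$ at $p$ and the fact that $p$ is a totally real point of $X$, it follows that $p \notin X^{ess}$, contradicting $p \in X^{tr} \subset X^{ess}$. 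The main obstacle in carrying this out is the simultaneous control required of $s_0$: it must be small enough for $C_{0,s_0}$ to avoid $X$ near $p$, yet the full family $\{C_{t,s_0}\}_{t \in [0,1]}$ must avoid $X^{tr}$ uniformly. This rests on the transversality and opposite-sign hypotheses, together with compactness of the relevant portion of $X^{tr}$ lying outside a fixed small neighbourhood of $p$.
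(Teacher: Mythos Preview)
Your argument has a genuine gap at the step where you pass from ``local intersection number zero'' to ``empty intersection.'' Take the model case $X = \{w = \bar z\}$ with $f_0 = z$, $g_0 = w$; then $V_0 = \{z=0\}$ and $W_0 = \{w=0\}$ meet $X$ transversely at the origin with opposite signs (since $h(z) = \bar z$ is orientation-reversing), but the smoothed curve $C_{0,s} = \{zw = s\}$ meets $X$ along the circle $\{|z|^2 = s\}$ whenever $s$ is real and positive. The intersection number is indeed zero, yet the intersection is a whole circle, not empty. In general, writing $X$ locally as $g_0 = h(f_0)$ with $h(z) = az + b\bar z + O(|z|^2)$ and $|b| > |a|$, the map $z \mapsto z\,h(z)$ sends a small disc onto a sector of opening strictly less than $2\pi$; the curve $C_{0,s}$ avoids $X$ only for $s$ lying outside this sector.

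This has a cascading effect on the rest of your argument. If you restrict to the ``good'' values of $s$, the curves $\{f_0 g_0 = s\}$ no longer foliate a full punctured neighbourhood of the node: they miss the (three-real-dimensional) preimage of the bad sector under $f_0 g_0$, and nothing you have said rules out $X^{ess}$ sitting there and accumulating at $p$. So the conclusion $p \notin X^{ess}$ does not follow.

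The paper, following Duval, replaces your one-parameter smoothing by the two-parameter family
\[
C_a :\quad (f_0 - a)\,(g_0 - h(a)) \;=\; \alpha\, h_{\bar\zeta}(a),
\]
with $a$ ranging over a small neighbourhood of $0$ in $\cx$ and $\alpha$ over a small left half-disc. Shifting the node to the point $(a,h(a)) \in X$ and taking the right-hand side proportional to $h_{\bar\zeta}(a)$ forces $C_a$ to avoid $X$ entirely (this is the computation in Duval's earlier paper), while the extra parameter $a$ is exactly what allows the family to sweep out all of $U \setminus X$, not just a sectorial portion. Each $C_a$ is then deformed out of $\overline{\Omega'}$ using the given families $\{V_t\}$ and $\{W_t\}$ together with hypotheses (ii) and (iii). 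The specific form of the right-hand side is not incidental; it is precisely what repairs the failure in your approach.
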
 

Duval's original result is stated for the $\mathcal O(\overline G)$-hull of a smooth totally real surface $X\subset \partial
G$, where $G \subset \cx^2$ is a smoothly bounded strictly pseudoconvex domain. 
The proof is  also valid in our situation. Indeed, in order to show that $p$ does not belong to $X^{ess}$ it suffices to find a neighbourhood $U$ of $p$ such that $\widehat{X}$ does not intersect $U \setminus X$. Let $F,G:[0,1] \times O \to \cx$ 
be the functions  defining the families $\{V_t\}$, $\{W_t\}$ that satisfy conditions of the lemma. We use the notation 
$f_t = F(t,\bullet)$ and $g_t = G(t,\bullet)$. It follows from (i) that near $p$ the functions  $f_0$ and $g_0$ provide 
local holomorphic coordinates and the real surface $X$ is defined near $p$ by the equation $g_0 = h \circ f_0$. Here $h$ 
is a $C^2$-diffeomorphism in a neighbourhood of the origin in $\cx$, fixing the origin and reversing the orientation, so 
that $\vert h_{\overline\zeta}(0) \vert > \vert h_\zeta(0) \vert$. Denote by $\tau\Delta_-$ the left semidisc of radius $\tau > 0$,
that is, $\tau\Delta_- = \{ \zeta \in \cx: \vert \zeta \vert < \tau, \ \Re \zeta < 0 \}$. For $\alpha \in\tau\Delta_-$  and a complex parameter~$a$ consider the analytic curves $C_a$ in $O$ defined by the equation 
$$
(f_0 - a)(g_0 - h(a)) =  \alpha h_{\overline\zeta}(a).
$$
There exists $\tau > 0$ such that  when the parameter $a$ runs over a small neighbourhood of the origin in $\cx$ and $\alpha$ 
 runs over $\tau\Delta_-$, the family $\{ C_a \}$ fills out an open set $U \setminus X$ for a suitable 
neighbourhood $U$ of $p$. The proof  due to  \cite{D2}, Lemma 1, pp. 584-585,  is obtained by  the linear approximation of 
$h$ near $a$.  One verifies two properties of the family $C_a$.  First, given $\alpha \in \tau\Delta_-$ and $a$, the curve $C_a$ 
avoids $X$. Second, for every point $q \in U \setminus X$ one can find suitable $a$ and  $\alpha$ such that $C_a$ contains~$q$.  

Finally we note that every curve $C_a$ can be swept out of $\Omega'$ through a continuous family of analytic curves in $O$ 
in accordance with Oka's characterization of hulls. Such a  sweeping family of analytic curves is 
explicitly constructed in \cite{D} pp. 110-111, using the defining functions $f_t$, $g_t$ and the assumptions  
(ii) and (iii) of Lemma \ref{l:d}. 

This shows that no point near $p$ can be in $X^{ess}$, and therefore $p$ does 
not belong to $X^{tr}$. This verifies Lemma~\ref{l:d}.

\medskip

{\it Step 2: Construction of the families $\{V_t\}$ and $\{W_t\}$.} We employ the second part of the construction due to 
J.~Duval \cite{D}. 

Fix an orientation on  the real hypersurface $\partial\Omega'$ and the disc $X$. This allows one to define an orientation on the 
leaves of the characteristic foliation. Let $p \in X \setminus \{ 0 \}$ and $v_1$ and $v_2$ be vectors in the tangent space $T_pX$ giving a positively oriented basis there.
A nonzero vector $v$ tangent to the leaf of the characteristic foliation through $p$ defines the positive orientation on this leaf if the triple $v_1$, $v_2$, $Jv$ is a positively oriented basis of $T_p(\partial\Omega')$. Here $J$ denotes the standard complex structure of $\cx^2$, i.e., the vector $Jv$ can be identified with $iv$.

We argue by contradiction. Let $p \in X \setminus \{ 0 \}$ be a totally real point such that $p$ lies in the  leaf  $\gamma$ of the characteristic foliation, $p \in X^{ess}$, but $X^{ess}$ does not meet both sides of $\gamma$. Fix an open neighbourhood $U'$ 
of $p$ small enough so that $0$ does not lie in $\overline U'$ and $\Omega' \cap U'$ is biholomorphic to a strictly convex domain. More precisely, one can assume that there are local coordinates $(z',w')$ in $U'$ such that  $p$ corresponds to the origin $0'$, 
$U'$ is a ball and $\Omega' \cap U'$ is strictly convex. Let $x$ and $y$ be points on $X$ near $0'$ that lie on the same leaf of the characteristic foliation. Assume that the direction from $x$ to $y$ along this leaf is positive for the described above orientation. Denote by $L(x,y)$ the complex line through $x$ and $y$. Then $L(x,y)$ meets $X \cap U'$ at the points $x$ and $y$ only, this intersection is transversal, positive at $x$ and negative at $y$ , see \cite{D}, Lemma 2 . Denote by $\Delta(x,y)$ the intersection 
of the line $L(x,y)$ with the ball $U'$.

Denote by $\gamma'$ a leaf of the characteristic foliation near $p$ parallel to $\gamma$. By assumption, one can choose 
$\gamma'$ to be disjoint from $X^{ess}$ in $U'$. Consider a (short) arc $\alpha: [0,1] \to X \cap U'$ such that $\alpha(0) = p$, 
$\alpha(1) = p'$, where $p'$ is a point of $\gamma'$ and such that for $t > 0$ the point $\alpha(t)$ is  on the same side of 
$\gamma$ as the leaf $\gamma'$. Finally, choose a point $x \in \gamma$ which precedes $p$, and a corresponding  point $x' \in \gamma'$ which precedes $p'$ . Let  $\beta:[0,1] \to X$ be an arc in $\gamma'$ with  $\beta(0) = x'$, $\beta(1) = p'$. 

Now we are able to construct the first family $\{ V_t \}$ of analytic curves. We begin  with the family $\Delta(x',\alpha(t))$ where 
$0 \leq t \leq 1$. As it was mentioned above, the line $L(x,p)$ intersects $X$ with positive sign at $p$. This property is stable with respect to continuous deformations of  complex lines $L(q,p)$ where $q$ moves from $x$ to $x'$ in $X$. Hence, the first disc 
$V_0 = \Delta(x',\alpha(0))$ of our family  intersects $X$ at $p$ with positive sign. We continue this family  with the discs 
$\Delta(\beta(t),p')$ , $0 \leq t \leq 1$, starting with $t = 0$. When $t = 1$ we arrive to the complex tangent $\Delta(p',p')$. 
The final piece of the family $\{ V_t \}$ is obtained by the translation $\Delta(p',p')$ into the complement of $\Omega'$ along the outward normal direction to $\partial \Omega'$ at $p'$. Similarly, we proceed with the construction of the second family $\{ W_t \}$ 
using a point $y \in \gamma$ that succeeds $p$ along $\gamma$ and a corresponding point $y' \in \gamma'$ that succeeds $p'$ along $\gamma'$. 

The curves $V_0$ and $W_0$ meet transversally at $p$ with opposite signs of intersection and for $t > 0$ the curves $V_t$, $W_t$ do not meet $X^{tr}$.
In the above local coordinates $(z',w')$ on $U'$ these curves are intersections 
of the described above complex lines with  $U'$, i.e., the corresponding functions $f_t$, $g_t$ are degree one polynomials in $(z',w')$. Since the families $\{ V_t \} $ and $\{ W_t \}$ can be chosen  arbitrarily close to the complex
tangent line to $\partial \Omega'$ at $p$, their boundaries are contained in $\partial U'$ and do not intersect 
$\overline{\Omega'}$. Therefore  $V_t$ and $W_t$ are analytic curves in a suitably chosen global neighbourhood $O$ of 
$\overline{\Omega'\cap \mathbb B(0,\e)}$ in $\cx^2$.  Now  Step 1 can be used. Lemma \ref{l:d} implies that $p$ does not 
lie in $X^{ess}$, which gives a contradiction. Proposition \ref{duval} is proved.

\subsection{Proof of the main results.} We now prove the main results of the paper assuming that Proposition~\ref{l:2} holds.

\begin{proof}[Proof of Theorem~\ref{t:ga}.]
Let $\gamma_1$ and $\gamma_2$ be as in Proposition~\ref{l:2}. It follows from Propositions ~\ref{l:2} and~\ref{duval} that $X^{tr}$ is contained in the union $\gamma_1 \cup \gamma_2$, and Rossi's maximum principle
implies $X^{ess} \subset \widehat{\gamma_1 \cup \gamma_2}$.

A rectifiable arc is polynomially convex \cite{S}.  Moreover, if $Y$ is compact 
and polynomially convex, and $\Gamma$ is a compact connected set of finite length, 
then the set $(\widehat {Y\cup \Gamma}) \setminus (Y\cup \Gamma)$ is either empty or contains 
a complex purely 1-dimensional analytic subvariety of $\mathbb C^2\setminus (Y \cup \Gamma)$ 
(see \cite{St}, p.122). By taking $Y$ and $\Gamma$ to be our rectifiable curves $\gamma_j$, 
we see as in the proof of Corollary \ref{disc-cor} that their union cannot bound a complex  1-dimensional variety. 
Therefore, $\gamma_1 \cup \gamma_2$ is polynomially convex:  $\widehat{\gamma_1 \cup \gamma_2} = \gamma_1 \cup \gamma_2 \subset X$. As a consequence we obtain that 
$X^{ess}$ also is contained in $X$. Let $p$ be a point of $ \widehat X \setminus X$. Then $p \in X^{ess} \setminus X$ which is impossible. This implies that $\widehat X \setminus X$ is empty. 
Hence, $X$ is polynomially convex. Theorem~\ref{t:ga} is proved.
\end{proof}

\begin{proof}[Proof of Corollary~\ref{c:1}.] Let $\phi(0)=p$.
By Theorem~\ref{t:ga} there exists $\e>0$ such that  $X = \overline{\phi(\Sigma)\cap \mathbb B(p,\e)}$ is 
polynomially convex. We may further assume that $\phi(\Sigma) \cap \partial \mathbb B(p,\e)$ is a 
rectifiable  and even smooth curve. By the result of  J.~Anderson, A.~Izzo, and J.~Wermer \cite[Thm.~1.5]{AIW}, 
if $X$ is a polynomially convex compact subset of $\cx^n$, and $X_0$ is a 
compact subset of $X$ such that $X \setminus X_0$ is a totally real submanifold of $\cx^n$, 
of class $C^1$, then continuous functions on $X$ can be approximated by polynomials if and 
only if this can be done on $X_0$.  We apply this result to $X=\overline{\phi(\Sigma) \cap \mathbb B(p,\e)}$ 
and $X_0=\{p\}\cup (\phi(\Sigma) \cap \partial \mathbb B(p,\e))$. The set  $X_0$, is polynomially 
convex. Indeed, if not, we obtain as in the proof of Theorem \ref{t:ga} that $\widehat X_0 \setminus X_0$ contains  a complex purely 1-dimensional analytic subvariety $V$ of $\mathbb C^2\setminus X_0$. But then $V$ is contained in $\widehat X$, which contradicts Theorem \ref{t:ga}. Furthermore, by \cite{S2} or \cite{St}, p. 122,  continuous functions on 
$X_0$ can be approximated by polynomials. From this the corollary follows.
\end{proof}

The rest of the paper is devoted to the proof of Proposition \ref{l:2}.

\section{Reduction to a dynamical system}\label{s:reduction}

In this section we deduce  the dynamical systems  describing the pull-back in $\rl^2_{(t,s)}$ of the characteristic 
foliations on $\Sigma$ and $\Sigma'$. In Sections \ref{s:gen-hyp-s} and~\ref{s:gen-hyp} we will discuss the 
topological behaviour  of these foliations near the origin. For simplicity, the integral curves of these dynamical systems  
will also be called the leaves of the characteristic foliation. 

\subsection{Foliation on $\Sigma$.} The tangent plane to $\Sigma\setminus\{0\}$ is spanned by the 
vectors
\[  X_t = \left( \begin{array}{c}
s \\ 2t^2 \\ 2t \\ 0
\end{array}\right),\ \  X_s =
\left( \begin{array}{c}
t \\ 0 \\ 0 \\ 1
\end{array}\right).
\]
The directional vector of the characteristic line field is determined
from the equation
\begin{equation}\label{e:char-sys-s}
X= \alpha X_t + \beta X_s,
\end{equation}
where $\alpha= \alpha(t,s), \beta=\beta(t,s)$ are some smooth
functions on $\rl^2\setminus\{0\}$, and the vector $X$ belongs to the complex tangent $H_{\pi(t,s)}M$. Let  
$$
I_2 =
\left( \begin{array}{cc}
1 & 0 \\
0 & 1  
\end{array}\right),
\ \ 
J=
\left( \begin{array}{cc}
0 & -I_2 \\
I_2 & 0  
\end{array}\right).
$$
Multiplication by $i$ of a
vector in $\cx^2$ corresponds to multiplication by $J$ of the
corresponding vector in $\rl^4$. For $v\in T_p M$, the inclusion $v \in H_p M$ 
holds if and only if  $v, iv \in T_pM$. Therefore, 
$$
X \in H_{\pi(t,s)}M \ \Longleftrightarrow \ 
\langle J(\alpha X_t + \beta X_s) ,\nabla \rho \rangle = 0,
$$
where $\langle \cdot,\cdot \rangle$ is the standard Euclidean  product in $\rl^4$, and
$\nabla \rho$ is the gradient of the function $\rho$. Therefore, we
can choose
\begin{equation}\label{e:ab-s}
\alpha =  \langle JX_s, \nabla \rho \rangle, \ \  
\beta = -\langle JX_t, \nabla \rho \rangle.
\end{equation}
A calculation yields
$$
\nabla \rho = (2ts, 3t^3, -s^2 - 3t^4, -2t^2s),
$$
and
$$
\begin{array}{r}
\alpha = -3t^3-ts^2-3t^5,\\
\beta = s^3+4t^2 s + 7 st^4.
\end{array}
$$
Thus, 
\begin{equation}
X=\alpha X_t + \beta X_s = \alpha d\pi \left(\begin{array}{c}1\\0\end{array}\right)
+ \beta d\pi \left(\begin{array}{c}0\\1\end{array}\right)
= d\pi \left(\begin{array}{c}\alpha\\ \beta\end{array}\right),
\end{equation}
where $d\pi$ is the differential of the map $\pi$. It follows
that the characteristic foliation on $\Sigma\setminus\{0\}$ (or, more precisely, its pull-back on 
$\mathbb R^2 \backslash \{ 0 \}$ by the parametrization map $\pi$) is given by the system
of ODEs of the form 
\begin{equation}\label{e:hyp-s}
\left\{
\begin{array}{l}
\dot t = -3t^3-ts^2-3t^5\\
\dot s = s^3+4t^2 s + 7 st^4, 
\end{array}\right.
\end{equation}
where the dot denotes the derivative with respect to the time variable $\tau$.

\subsection{Foliation on $\Sigma'$.} Let $f: \rl^2 \to \rl^4$ be given by 
$$
f:= \psi \circ \phi \circ \pi,
$$ 
where we use the notation of the previous section. 
The directional vector of the characteristic foliation on $\Sigma'$ is determined by 
$$
X'=\alpha X'_t + \beta X'_s,
$$ 
where $X'_t = \partial f/\partial t$ and $X'_s = \partial f/\partial s$, and $\alpha= \alpha(t,s), \beta=\beta(t,s)$ 
are some smooth functions on $\rl^2\setminus\{0\}$ which are chosen in such a way that the vector $X'$ belongs 
to the complex tangent $H_{f(t,s)}M'$. We have
$$
X' \in H_{f(t,s)}M' \ \Longleftrightarrow \ 
\langle J(\alpha X'_t + \beta X'_s) ,\nabla \rho' \rangle = 0,
$$
where $\rho'$ is a defining function of 
$M'$, and the gradient $\nabla \rho'$ is expressed in terms of $(t,s)$ using the 
parametrization~$f$. Therefore, we can choose
\begin{equation}\label{e:ab}
\alpha(t,s) = \langle JX'_s, \nabla \rho' \rangle, \ \ \beta(t,s)=-\langle JX_t, \nabla \rho' \rangle.
\end{equation}
Thus, 
\begin{equation}
X'=\alpha X'_t + \beta X'_s = d f \left(\begin{array}{c}\alpha\\ \beta\end{array}\right).
\end{equation}
It follows that the characteristic foliation on $\Sigma'$ is determined by the system of ODEs of the form 
\begin{equation}\label{e:hyp}
\left\{
\begin{array}{l}
\dot t = \a(t,s)\\
\dot s = \beta(t,s) .
\end{array}\right.
\end{equation}

We write $f(t,s)=(f_1(t,s), \dots, f_4(t,s))$, where using \eqref{e:differen} 
and \eqref{e:whitney} we may express each $f_j$ as a power series in $(t,s)$:
\begin{multline}\label{e:f1}
f_1 (t,s) = x + \sum_{j+k+l+m \ge 2} \tilde f^1_{jklm}\, x^j u^k y^l v^m =\\
ts + f^1_{02} s^2 +  f^1_{12} ts^2 + f^1_{21} t^2s + f^1_{03}s^3 + \sum_{j+k\ge 4} f^1_{jk}t^j s^k,
\end{multline}
where $\tilde f^1_{jklm}$ and $f^1_{jk}$ are  real numbers. Similarly,
\begin{multline}\label{e:f2}
f_2 (t,s) = u + \sum_{j+k+l+m \ge 2} \tilde f^2_{jklm}\, x^j u^k y^l v^m =\\
\frac{2}{3}t^3 + f^2_{02} s^2 + f^2_{12} ts^2 + f^2_{21} t^2s + f^2_{03}s^3 + 
\sum_{j+k\ge 4} f^2_{jk}t^j s^k.
\end{multline}
Denote by $e_{jk}$ the entries of the matrix $E$ in \eqref{e:differen}.  Then
\begin{multline}\label{e:f3}
f_3 (t,s) = e_{11}x+e_{12}u +g_{11}y+g_{12}v + \sum_{j+k+l+m \ge 2} \tilde f^3_{jklm}\, x^j u^k y^l v^m =\\
g_{12}s + g_{11}t^2 + e_{11}ts + f^3_{02} s^2 + \frac{2e_{12}}{3}t^3  + f^3_{12} ts^2 + f^3_{21} t^2s + 
f^3_{03}s^3 + \sum_{j+k\ge 4} f^3_{jk}t^j s^k;
\end{multline}
\begin{multline}\label{e:f4}
f_4 (t,s) = e_{21}x+e_{22}u +g_{12}y+g_{22}v + \sum_{j+k+l+m \ge 2} \tilde f^4_{jklm}\, x^j u^k y^l v^m =\\
g_{22}s + g_{12}t^2 + e_{21}ts + f^4_{02} s^2 + \frac{2e_{22}}{3}t^3 + f^4_{12} ts^2 + f^4_{21} t^2s + 
f^4_{03}s^3 + \sum_{j+k\ge 4} f^4_{jk}t^j s^k.
\end{multline}
From these formulas we immediately obtain
\begin{equation}\label{e:x't}
X'_t = 
\left(\begin{array}{c}
s + 2 f^1_{21}ts + f^1_{12}s^2\\ 
2t^2 +2 f^2_{21}ts + f^2_{12}s^2 \\
2g_{11}t+e_{11}s+2e_{12}t^2+  2 f^3_{21}ts + f^3_{12}s^2  \\
2g_{12} t + e_{21} s + 2e_{22}t^2 + 2 f^4_{21}ts + f^4_{12}s^2 
\end{array}\right)
+ o(|(t,s)|^2),
\end{equation}
and 
\begin{equation}\label{e:x's}
X'_s = 
\left(\begin{array}{c}
t  + 2 f^1_{02}s + f^1_{21}t^2 + 2 f^1_{12}ts +3 f^1_{03}s^2 \\ 
2 f^2_{02}s + f^2_{21}t^2 + 2 f^2_{12}ts +3 f^2_{03}s^2 \\
g_{12}+e_{11}t +2 f^3_{02}s + f^3_{21}t^2 + 2 f^3_{12}ts +3 f^3_{03}s^2  \\
g_{22}+e_{21}t +2 f^4_{02}s + f^4_{21}t^2 + 2 f^4_{12}ts +3 f^4_{03}s^2  
\end{array}\right)
+ o(|(t,s)|^2).
\end{equation}

The defining equation of $M'$ can be chosen to be $\rho \circ (\psi \circ \phi)^{-1}$, where $\rho$
defines $M$ as in \eqref{e:M}. Let $(x',u', y', v')$ be the coordinates in the target domain of $\psi \circ \phi$,
in particular, we have $x'=f_1$, $u'=f_2$, $y'=f_3$, and $v'=f_4$. Let
\begin{equation}\label{e:inv-differen}
(D(\psi\circ\phi)(0))^{-1}=
\left(\begin{array}{cc}I_2 & 0\\ E' & G'\end{array}\right), \ E'=(e'_{jk}), \ G'=(g'_{jk}).
\end{equation}
Then
\begin{multline}
(\psi \circ \phi)^{-1}(x', u', y', v') = \left(
x' + \sum_{j+k+l+m \ge 2} h^1_{jklm}\, x'^j u'^k y'^l v'^m, \right. u' + \sum_{j+k+l+m \ge 2} h^2_{jklm}\, 
x'^j u'^k y'^l v'^m, \\
e'_{11}x'+ e'_{12}u'+g'_{11}y'+g'_{12}v' + \sum_{j+k+l+m \ge 2} h^3_{jklm}\, x'^j u'^k y'^l v'^m, \\
\left. e'_{21}x'+ e'_{22}u'+g'_{12}y'+g'_{22}v' + \sum_{j+k+l+m \ge 2} h^4_{jklm}\, x'^j u'^k y'^l v'^m \right).
\end{multline}
Therefore,
\begin{multline}\label{e:rho'}
\rho'(x', u', y', v') = \left(x' + \sum_{j+k+l+m \ge 2} h^1_{jklm}\, x'^j u'^k y'^l v'^m\right)^2 -\\
\left(e'_{11}x'+\dots+g'_{12}v' + \sum h^3_{jklm}\, x'^j u'^k y'^l v'^m\right) \cdot
\left(e'_{21}x'+\cdots+g'_{22}v' + \sum h^4_{jklm}\, x'^j u'^k y'^l v'^m\right)^2 \\
+ \frac{9}{4} \left(u' + \sum_{j+k+l+m \ge 2} h^2_{jklm}\, x'^j u'^k y'^l v'^m\right)^2 -
\left(e'_{11}x'+\dots+g'_{12}v' + \sum h^3_{jklm}\, x'^j u'^k y'^l v'^m\right)^3 
\end{multline}
Note that in \eqref{e:rho'} the only quadratic terms are $x'^2$ and $\frac{9}{4} u'^2$.
By taking partial derivatives in the above expression with respect to $x', u', y'$ and $v'$, and expressing
the resulting vector in terms of $(t,s)$ we will obtain the coordinates of the vector 
$$
\nabla \rho' = \left(\frac{\partial\rho'}{\partial x'},\frac{\partial\rho'}{\partial u'},\frac{\partial\rho'}{\partial y'},
\frac{\partial\rho'}{\partial v'}\right)=(R_x(t,s), R_u(t,s), R_y (t,s), R_v(t,s)).
$$ 
To determine the phase portrait of the characteristic foliation we will only need some low order terms in
the power series 
$$
\alpha(t,s) = \sum_{j,k \ge 0} \alpha_{jk} t^k s^j, \ \ 
\beta(t,s) = \sum_{j,k \ge 0} \beta_{jk} t^k s^j.
$$
Therefore, instead of explicit differentiation of \eqref{e:rho'},
we will employ a different strategy for computing coefficients of the terms of lower degree in the 
$(t,s)$-Taylor expansion of $\alpha$ and $\beta$.

\subsection{The power series of $\alpha$.}
We have 
\begin{equation}
\label{e:alpha}
\alpha(t,s) = \langle JX'_s, \nabla \rho' \rangle = -(X'_s)_3 \cdot R_x - (X'_s)_4 \cdot R_u  +
 (X'_s)_1 \cdot R_y + (X'_s)_2 \cdot R_v .
\end{equation}
We proceed in several steps computing the coefficients in the expansion for $\alpha$. To begin with, there cannot 
be a free term in the power series of $\alpha$ because every term in $\nabla \rho'$ will necessarily have positive 
degree in $t$ or $s$. 

\smallskip

\noindent{\it Term $t$}: Since no component of $\nabla \rho'$ can contain a degree zero term or the monomial $t$, 
there is no term $t$ in $\alpha$.

\smallskip

\noindent{\it Term $s$}: The first two components of $X'_s$ do not contain free terms, therefore, monomial $s$
can appear in $\alpha$ only if $R_x$ or $R_u$ will contain it. By inspection of \eqref{e:f1} - \eqref{e:f4} we see
that $y'$ and $v'$ are the only terms that can produce monomial $s$. Therefore, for $s$ to appear in $R_x$ or $R_u$,
the function $\rho'$ must contain at least one of the terms $x' y'$, $x' v'$, $u' y'$ or $u' v'$. However, from 
\eqref{e:rho'} neither of these terms exists. Thus, there is no monomial $s$ in the power series of $\alpha$.

\smallskip

\noindent{\it Term $ts$}: We inspect terms in $X'_s$ of degree lower than $ts$. These appear in $(X'_s)_1$
(terms $t$ and $s$), in $(X'_s)_2$ (term $s$), in $(X'_s)_3$ (a free term, $t$ and $s$), and in $(X'_s)_4$ 
(a free term, $t$ and $s$). Therefore, for $ts$ to appear in $\alpha$, at least one of the following options must occur:
\begin{enumerate}
\item either $R_x$ or $R_u$ has $t$, $s$ or $ts$;
\item $R_y$ has either $t$ or $s$
\item $R_v$ has $t$. 
\end{enumerate}
Of the above three options only (1) can happen: $\rho'$ contains the term $x'^2$, and therefore, $R_x$
contains $2ts$. It follows now from \eqref{e:f1},\eqref{e:x's}  and \eqref{e:alpha} that $\alpha_{11}=-2 g_{12}$.

To simplify further considerations, we note that term $t$ cannot occur in any of the components of
the vector~$\nabla\rho'$.

\smallskip

\noindent{\it Term $t^2$}: By inspection of $X'_s$, we conclude that either $R_x$ or $R_u$ has term 
$t^2$, so $\nabla\rho'$ must have either $x'y'$, $x'v'$, $u'y'$ or $u'v'$, neither of which appears. This
means that $\alpha$ does not contain term~$t^2$.

\smallskip

\noindent{\it Term $s^2$}: By inspection of $X'_s$, the following options  are possible:
\begin{enumerate}
\item either $R_x$ or $R_u$ has $s$ or $s^2$;
\item either $R_y$ or $R_v$ has term $s$.
\end{enumerate}
Option (2) is impossible, but $\rho'$ can have terms $u'^2$, $u'v'^2$ or $u'y'^2$ which gives (1). We have 
the following   expression for $\alpha_{02}$, which depends on the coefficients of the Taylor
expansion for $(\psi \circ \phi)^{-1}$:
$$\alpha_{02} = \frac{9}{2}(h^2_{0002}g^2_{22} + f^2_{02} + h^2_{0020}g^2_{12}).$$

\smallskip

\noindent{\it Term $t^3$}: By inspection of $X'_s$,  the following options are possible:
\begin{enumerate}
\item either $R_x$ or $R_u$ has at  least one of $t^2$ or $t^3$;
\item $R_y$ has $t^2$.
\end{enumerate}
Option (2) can happen only if $\rho'$ would have $y'^2$ or $y'v'$, which is impossible. For the
same reason in option (1) terms $R_x$ or $R_u$ cannot produce $t^2$. The only term in $\nabla\rho'$ 
that can produce $t^3$ is $u'$. Therefore, the only possibility in (1) is the term $t^3$ in $R_u$, 
which indeed happens since $\rho'$ contains $u'^2$. It follows that $\alpha_{30}=-3g_{22}$.

\smallskip

Thus,
$$
\alpha(t,s) = -2g_{12}ts + \alpha_{02}s^2 - 3g_{22}t^3 + \sum_{j+k>2,\ (j,k)\ne (3,0)} \alpha_{jk} t^j s^k.
$$

\subsection{The power series of $\beta$.}
We have
$$
\beta(t,s) = -\langle JX'_t, \nabla \rho' \rangle = (X'_t)_3 \cdot R_x +(X'_t)_4 \cdot R_u  - (X'_t)_1 \cdot R_y - 
(X'_t)_2 \cdot R_v .
$$
Again, there cannot be a free term in $\beta$ because every term in $\nabla \rho'$ will necessarily have positive
degree in $t$ or $s$. Further, no component in $\nabla\rho'$ can produce a term $t$, and so the power series of
$\beta$ cannot contain monomial~$t$.

\smallskip

\noindent{\it Term $s$}: Since no component of $X'_t$ contains a free term, $\beta$ cannot have 
monomial~$s$.

\smallskip

\noindent{\it Term $ts$}: By inspection of $X'_t$ we conclude that either $R_x$ or $R_u$ must
have term $s$, which is impossible. Hence, $\beta$ does not contain monomial $ts$.

\smallskip

\noindent{\it Terms $t^2$ and $s^2$}: Analogous considerations show that these terms cannot appear
in $\beta$.

\smallskip

\noindent{\it Term $t^2s$}: By inspection of $X'_t$ the following is possible for $R$:
\begin{enumerate}
\item $R_x$ has at  least one of $t^2$, $s$, or $ts$;
\item $R_u$ has at  least one of $t^2$, $s$, or $ts$;
\item $R_y$ has $t^2$;
\item $R_v$ has $s$.
\end{enumerate}
Options (3) and (4) imply that $\rho'$ has $v'^2$, $y'^2$, or $v'y'$, neither of which is possible.
Option (2) implies that $\rho'$ has $u'y'$, $u'v'$ and $u'x'$. Neither of these terms are present
in $\rho'$, so (2) is also not possible. Option (1) implies that $\rho'$ has at least one of $x'y'$, $x'v'$,
or $x'^2$. Only the latter happens, and so $\beta_{21} = 4g_{11}$.

\smallskip

\noindent{\it Term $ts^2$}: This term can appear in $\beta$. We have
$$\beta_{12} = 2e_{11} + 6 g_{12}f^2_{02}.$$

\smallskip

\noindent{\it Term $t^3$}: By inspection of $X'_t$, the only option is that either $R_x$ or
$R_u$ has term $t^2$. This is however not possible.

\smallskip

\noindent{\it Term $t^4$}: The possibilities for $R$ are as follows:
\begin{enumerate}
\item $R_x$ has at  least one of $t^2$ or $t^3$;
\item $R_u$ has at  least one of $t^2$, or $t^3$;
\item $R_v$ has $t^2$.
\end{enumerate}
Option (3) cannot occur. The only possible option in (1) or (2) is that $t^3$ appears in $R_u$. This 
comes from the term $u'^2$ in $\rho'$. It follows that
$\beta_{04}=6g_{12}$.

\smallskip

\noindent{\it Term $s^3$}:  We have
$$\beta_{03} = 2e_{11}f^1_{02} + \frac{9}{2}e_{21}f^2_{02} .$$

\smallskip

Combining everything together we get
$$
\beta(t,s) = 4g_{11}t^2s + \beta_{12}ts^2 + \beta_{03}s^3+ 6g_{12}t^4 + 
\sum_{j+k>3,\ (j,k)\ne (4,0)} \beta_{jk} t^j s^k.
$$

\bigskip

We note that if $\phi$ is merely a smooth diffeomorphism, then the above calculations give the 
values for the jets of $\alpha$ and $\beta$ at the origin of the corresponding orders. In either case
the characteristic foliation on $\Sigma'$ is given by
\begin{equation}\label{e:system}
\left\{
\begin{array}{l}
\dot t = \alpha(t,s) = -2g_{12}ts + \alpha_{02}s^2 - 3g_{22}t^3 + o(|t|^3+|s|^2+|ts|)\\
\dot s = \beta(t,s) = 4g_{11}t^2s + \beta_{12}ts^2 + \beta_{03}s^3+ 6g_{12}t^4 + 
o(|t^2s| + |ts^2| + |s|^3+ |t|^4) .
\end{array}
\right.
\end{equation}

It is easy to see that for a generic symplectomorphism $\phi:(x,u,y,v) \mapsto (x',u',y',v')$  and a generic $\psi$ 
the coefficients $\alpha_{02}$, $\beta_{12}$, $\beta_{03}$ do not vanish. Indeed, if $\psi$ is close to the identity 
map and the component $u'$ of $\phi$ contains the term $a v^2$ with $a \neq 0$, then $f^2_{02} \neq 0$ and  
$\alpha_{02}$, $\beta_{12}$, $\beta_{03}$ do not vanish. Therefore, they do not vanish generically.

\medskip

{\bf Remark.} It follows from the above considerations that our restriction  on $\phi$ to be generic involves only 
the 2-jet of $\phi$ at the origin. In other words, it suffices to require in Theorem \ref{t:ga} that $\phi$ has a generic 
2-jet at the origin.

\begin{lemma}\label{l:iso}
Let $\phi$ be a local symplectomorphism near the origin, and let $\mathcal X$ be the vector field near the origin
in $\rl^2$ corresponding to the characteristic foliation on $\Sigma'$. Then $\mathcal X$ does not 
vanish outside the origin.
\end{lemma}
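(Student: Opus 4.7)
The plan is to identify the vanishing locus of $\mathcal{X} = (\a(t,s), \b(t,s))$ with the locus of complex tangents of $\Sigma'$, and then to rule out complex tangents outside the origin by means of the Lagrangian hypothesis on $\phi(\Sigma)$. The only nontrivial point in the argument is this second step, which reduces to an elementary identity in linear symplectic geometry.

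First I would observe that on $\rl^2 \setminus \{0\}$ the map $f = \psi \circ \phi \circ \pi$ is an immersion, so $X'_t = \partial f/\partial t$ and $X'_s = \partial f/\partial s$ form a basis of $T_{f(t,s)}\Sigma'$. From the defining formulas $\a = \langle JX'_s, \nabla \rho'\rangle$ and $\b = -\langle JX'_t, \nabla \rho'\rangle$, the pair $(\a(t,s), \b(t,s))$ vanishes precisely when $JX'_t$ and $JX'_s$ are both orthogonal to $\nabla \rho'$, that is, when both lie in $T_{f(t,s)} M'$. Combined with $X'_t, X'_s \in T_{f(t,s)} M'$ (which follows from $\Sigma' \subset M'$), this forces $X'_t, X'_s \in H_{f(t,s)} M'$, and hence $T_{f(t,s)}\Sigma' \subseteq H_{f(t,s)} M'$. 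Both sides have real dimension $2$, so they coincide, meaning that $T_{f(t,s)}\Sigma'$ is a complex line, i.e., $\Sigma'$ has a complex tangent at $f(t,s)$.

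To exclude such a complex tangent, I would exploit the hypothesis that $\phi$ is a symplectomorphism: $\Sigma$ is Lagrangian and $\phi^*\omega = \omega$, so $\phi(\Sigma)$ is Lagrangian away from $\phi(0)$. For any nonzero tangent vector $v$ at a smooth point of a Lagrangian surface in $(\cx^2, \omega)$, the compatibility identity $\omega(v, Jv) = |v|^2 > 0$ prevents $Jv$ from also being tangent; consequently, Lagrangian surfaces in standard $\cx^2$ are automatically totally real at their regular points. Since $\psi$ is $\cx$-linear and preserves totally real planes, $\Sigma' = \psi(\phi(\Sigma))$ is totally real on $\Sigma' \setminus \{0\}$. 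This contradicts the conclusion of the previous step, and proves the lemma. The only subtle point to bear in mind is that the hypothesis must be \emph{global} symplecticity of $\phi$ near $0$, not merely that $D\phi(0)$ is symplectic, since the Lagrangian condition is needed at every regular point of $\phi(\Sigma)$.
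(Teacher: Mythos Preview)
Your proposal is correct and follows essentially the same approach as the paper: both argue that vanishing of $(\alpha,\beta)$ at $(t_0,s_0)\ne 0$ forces $f(t_0,s_0)$ to be a complex point of $\Sigma'$, and then rule this out because $\phi(\Sigma\setminus\{0\})$ is Lagrangian (hence totally real) and $\psi$ is $\cx$-linear. The paper states this in three sentences without justifying the intermediate claims, whereas you spell out both the linear-algebra step identifying the vanishing locus with complex tangencies and the compatibility identity $\omega(v,Jv)=|v|^2$ showing Lagrangian implies totally real; but the underlying argument is identical.
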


\begin{proof}
Since $\phi$ is symplectic, $\phi (\Sigma \setminus \{0\})$ is a Lagrangian surface, in particular, totally real. 
Therefore, $\psi \circ \phi (\Sigma\setminus \{0\})$ does not contain complex points. Further, it easily follows 
from \eqref{e:ab} that $\alpha(t_0,s_0)=\beta(t_0,s_0)=0$, $(t_0,s_0) \ne 0$ if and only if 
$f(t_0, s_0)$ is a complex point of $\Sigma'$. From this the result follows.
\end{proof}

\section{Generalities on planar vector fields}
For the proof of Proposition~\ref{l:2} we need to determine the topological structure of the
{\it orbits} or {\it maximal integral curves} associated with the vector fields defined by~\eqref{e:hyp-s} 
and~\eqref{e:system}. Both systems have higher order degeneracy (the linear part vanishes) at the origin, 
and consequently it is a {\it nonelementary} singularity of~\eqref{e:hyp-s} and~\eqref{e:system}. 
Therefore, standard results, such as the Hartman-Grobman theorem, do not apply here. Instead, 
we will use some more advanced tools from dynamical systems. We will be primarily interested in 
understanding the topological picture of~\eqref{e:hyp-s} and~\eqref{e:system} near the origin up to a 
homeomorphism preserving the orbits. In this section we outline relevant results and recall some common 
terminology.

\subsection{Finite jet determination of the phase portrait.}
The local phase portrait of a vector field near a nonelementary isolated singularity can be determined through a 
finite sectorial decomposition. This means that a neighbourhood of the singularity is divided into a finite number
of sectors with certain orbit behaviour in each sector. If the vector field has at least one {\it characteristic}
orbit (i.e., orbits approaching in positive or negative time the singularity with a well-defined slope limit), then the 
boundaries of the sectors can be chosen to be characteristic orbits. The overall portrait is then understood by 
gluing together the topological picture in each sector. The general result due to Dumortier~\cite{d} 
(see also~\cite{DLA}) can be stated as follows: 

\smallskip

{\it Suppose that a $C^\infty$-smooth vector field $\mathcal X$ singular at the origin in $\rl^2$ satisfies  
the \L ojasiewicz inequality 
$$
|\mathcal X(x)| \ge c |x|^k, \ \ c>0, \ k\in \mathbb N,
$$
for $x\in \rl^2$ is some neighbourhood of the origin. Then $\mathcal X$ has the finite sectorial decomposition 
property, that is, the origin is either a centre (all orbits are periodic), a focus/node (all orbits terminate at
the origin in positive or negative time), or there exists a finite number of characteristic orbits which bound
sectors with a well-defined orbit behaviour (hyperbolic, parabolic, or elliptic). If the vector field $\mathcal X$
has a characteristic orbit, then its phase portrait is determined by its jet of finite order $k$, in the sense that
any other vector field with the same jet of order $k$ at the origin has the phase portrait homeomorphic to
that of $\mathcal X$. Further, whether the vector field $\mathcal X$ has a characteristic orbit depends 
only on a jet of $\mathcal X$ of some finite order.}

\smallskip

The original proof of the above result in \cite{d} is based on the desingularization by 
means  of successive (homogeneous) blow-ups. After each blow-up the singularity is replaced by a circle, 
and after a finite number of such blow-ups one obtains a vector field with only nondegenerate singularities. 
The construction of the blow-up maps depends only on a finite order jet of the original vector field at the 
origin. From the configuration of the singularities of the modified system on the preimage of the origin under 
the composition of blow-ups, it is always possible to deduce if the original vector field has a characteristic
orbit. If such an orbit exists, then the singularity is not a centre or a focus, and the phase portrait is 
determined by a jet of finite order. Further, the \L ojasiewicz inequality holds for any real analytic vector field 
in a neighbourhood of an isolated singularity (see, e.g., \cite{BM}) and, in particular, in our case, in 
view of Lemma~\ref{l:iso}.

Alternatively, it is possible to use {\it quasihomogeneous} blow-ups, which are chosen according to the Newton 
diagram associated with $\mathcal X$ (see \cite{P}). The advantage is that this gives a computational algorithm 
for constructing the sectorial decomposition for a particular system. A detailed discussion of this approach for
real analytic systems is given in Bruno \cite{B} in the language of normal forms. Using Bruno's method we will
show that for a real analytic $\phi$ in general position, the vector field defined by \eqref{e:system} will 
always have a characteristic orbit, and its phase portrait near the origin is a saddle. 

If in Theorem~\ref{t:ga} the map $\phi$ is smooth, then the vector field corresponding to the characteristic 
foliation is only smooth, and the \L ojasiewicz inequality imposes additional assumption on the vector field,
and therefore on $\phi$. The \L ojasiewicz condition depends on the jet of the vector field at the origin and holds 
for all jets outside a set of infinite codimension in the space of jets, but it is not clear whether for a generic 
smooth symplectomorphism the inequality is satisfied. However, assuming that the \L ojasiewicz condition 
does hold, the topological picture of the characteristic foliation is determined by its finite jet at the origin. 
Therefore, we may consider a polynomial
vector field obtained by truncation of \eqref{e:system} at sufficiently high order without distorting the phase
portrait of the system. After that we may apply Bruno's method to determine its geometry. Thus, in 
Theorem~\ref{t:ga} we may assume that $\phi$ is a generic smooth symplectomorphism such that the vector 
field corresponding to the characteristic foliation satisfies the \L ojasiewicz inequality.
 
If in Theorem~\ref{t:ga} the map $\phi$ is a real analytic diffeomorphism with $D\phi(0)$ symplectic, then all 
of the arguments go through provided that the vector field \eqref{e:system} vanishes at the origin only. 
The latter holds for the following reason: consider near the origin the complexification $F$ of the real analytic map 
$f=\psi \circ \phi \circ \pi: \rl^2 \to \cx^2$. Then $F: \cx^2 \to \cx^2$ is a holomorphic map such that 
$F|_{\rl^2_{(t,s)}}=f$, in particular, $F(\rl^2)=\Sigma'$. Moreover, since $f$ has rank 2 outside the origin,
it follows that the Jacobian of $F$ does not vanish on $\rl^2\setminus\{0\}$, and therefore, $F$ is a local
biholomorphism near any point on $\rl^2\setminus\{0\}$. But this implies that $\Sigma'\setminus \{0\}$
is totally real, and therefore the characteristic foliation has no singularities outside the origin. Thus,
Theorem~\ref{t:ga} holds under the assumption that $\phi$ is a generic real analytic diffeomorphism with
$D\phi(0)$ symplectic.

\medskip

In the remaining part of this section we outline general theory of normal forms and sector decomposition of dynamical systems 
due to Bruno~\cite{B}, while the actual numerical calculations for~\eqref{e:hyp-s} and~\eqref{e:system} are presented in
Section~\ref{s:gen-hyp-s}.

\subsection{Normal forms for elementary singularities.} We state three theorems due to Bruno
on normal forms for vector fields near an isolated {\it elementary} singularity. Consider the 
system

\begin{equation}\label{e.Sys}
\dot x_i = \lambda_i x_i + \sigma_i x_{i-1} + \varphi_i(X), \  i = 1, 2,
\end{equation}
where $x_i$ are smooth functions of a real variable and  $X=(x_1,x_2)$. Here  $\sigma_j, \lambda_j$  are real, $\sigma_1 = 0$  and the series $\varphi_i$ does not contain constant or linear terms. In other words, using the notation $X^Q = x_1^{q_1} x_2^{q_2}$ for  $Q=(q_1, q_2)\in \mathbb Z^2$,  we can write 
\begin{equation*}
\varphi_i (X)= \sum_{Q} f_{iQ} X^Q, \  i = 1, 2,
\end{equation*}
where $q_j \ge 0$, $q_1 + q_2 >0$. The main assumption is that at least one of the eigenvalues $\lambda_i$ is nonzero that is $|\lambda_1|+|\lambda_2| \ne 0$. This means that the  origin is an elementary singularity. We suppose below that all systems considered in the Normal Forms Theorems are real analytic, though the considerations in the formal power series category also make sense.

The goal is to transform system (\ref{e.Sys}) to the simplest possible form 

\begin{equation}
\label{NF1}
\dot y_i = \tilde \psi_i(Y):= \lambda_i y_i + \sigma_iy_{i-1} + \psi_i(Y), \ i = 1, 2
\end{equation}
by a local invertible 
change of coordinates 
\begin{equation}\label{change}
x_i = y_i + \xi_i(Y), \ i = 1, 2,
\end{equation}
where the series $\xi_i$ in $Y = (y_1,y_2)$ do not contain constant or linear terms:
$$\xi_i(Y) = \sum_{|Q|>1} h_{iQ} Y^Q, \ i =1,2.$$
Here and below we use the notation $\vert Q \vert = \vert q_1 \vert + \vert q_2 \vert$.
Such a change of coordinates in general is not real analytic, i.e., the series $\xi_i$ can be divergent. 
For this reason we consider formal power series $\xi_i$ and refer to~\eqref{change} as a {\it formal} changes of coordinates.

It is convenient to use the representation
\begin{equation}
\label{NF2}
\tilde \psi_i(Y) = y_ig_i(Y) = y_i \sum_{Q \in N_i} g_{iQ} Y^Q, i = 1,2,
\end{equation}
where
$$N_1 = \{Q = (q_1,q_2) \in  \mathbb Z^2: q_1 \ge -1, q_2\ge 0 , q_1+q_2 \ge 0\},$$ 
$$N_2 = \{Q = (q_1,q_2) \in  \mathbb Z^2:  q_1\ge 0 , q_2 \ge -1,  q_1+q_2 \ge 0\}.$$ 

Set  $\Lambda = (\lambda_1,\lambda_2)$ and denote by $\langle\bullet,\bullet\rangle$  the standard inner product in $\rl^2$.

\medskip {\it 

\noindent {\bf Principal Normal Form} \cite[Ch. II, \S 1, Thm 2, p. 105]{B}: There exists a formal change of coordinates (\ref{change}) 
such that system~\eqref{e.Sys} in the new coordinates takes the form (\ref{NF1}) 
where $g_{iQ}=0$ for $Q = (q_1,q_2)$ satisfying 
$\langle Q, \Lambda \rangle = q_1 \lambda_1 + q_2\lambda_2 \neq 0$.}

\medskip

Therefore the normal form  (\ref{NF1}) contains only terms of the form $y_ig_{iQ}Y^Q$ satisfying
\begin{equation}
\label{res}
\langle Q,\Lambda \rangle  = 0 .
\end{equation}
Such terms are called {\it resonant}.

The fundamental question on the convergence of a normalizing change of coordinates for an analytic system ~\eqref{e.Sys} is discussed in \cite{B}. In the cases which we will consider below, normalizing changes of coordinates (\ref{change}) will be 
analytic or at least $C^\infty$-smooth local diffeomorphisms (see \cite{B}). This is sufficient for the study of local topological behaviour of integral curves.

\medskip

Consider now a more general system of two differential equations in two variables of the form 
\begin{equation}\label{e:br0}
\dot x_i = \lambda_i x_i + x_i \sum_{Q\in \bf V} f_{iQ} X^Q=\lambda_i x_i + x_i f_i, \ \ i=1,2,   
\end{equation}
where $\Lambda = (\lambda_1, \lambda_2) \ne 0$. The set  $\bf V \subset \mathbb Z^2$, over which the exponents $Q$ run, 
is to be prescribed. In the hypothesis of the Principal Normal Form Theorem, $\varphi_i(X)$ are power series in nonnegative 
powers of variables and the corresponding $\bf V$ is almost completely contained in the first quadrant of the plane. 

To formulate a weaker assumption on $\bf V$  we consider two vectors $R^*$ and $R_*$ in 
$\mathbb R^2$ contained in the second and the forth quadrant respectively, and denote by  $\bf V$ the sector bounded by $R^*$ and $R_*$ and containing the first quadrant. We assume that $R^*$ and $R_*$ are such that
$\bf V$ has angle less than $\pi$. As a consequence, the sector $\bf V$ is the convex cone generated by $R^*$ and $R_*$ i.e. consists of the vectors $\alpha_1 R^* + \alpha_2 R_*$ with  $\alpha_j \geq 0$.  We use the notation $\vert X \vert = (\vert x_1 \vert, \vert x_2 \vert)$ and $\vert X \vert^Q = \vert x_1 \vert^{q_1}\vert x_2 \vert^{q_2}$. 

Denote by $\mathcal V(X)$ the space of power series $\sum_Q f_Q X^Q$, where $Q\in \bf V$. Since in our situation such a 
series can have an infinite number of terms with negative exponents (even after multiplication by $x_i$), the notion of its convergence requires clarification. Consider first a numerical series
\begin{equation}
\label{ser1}
\sum_{Q \in {\mathbb Z^2}} a_Q
\end{equation}
where the indices $Q$ run through ${\mathbb Z}^2$. Let $(\Omega_n)$ be an increasing exhausting sequence of bounded 
domains in $\rl^2$. Set 
$$S_n = \sum_{Q \in \Omega_n} a_Q$$
(the partial sums). If the sequence $(S_n)$ admits the limit $S$ and this limit is independent of the choice of the sequence 
$(\Omega_n)$, then we say that series  (\ref{ser1}) converges to the sum $S$. It is well-known that if for some sequence 
$(\Omega_n)$ the sequence of the partial sums of the series 
\begin{equation}
\label{ser2}
\sum_{Q \in {\mathbb Z}^2} \vert a_{Q}\vert
\end{equation}
converges, then series (\ref{ser1}) and (\ref{ser2}) converge. In this case we say that series (\ref{ser1}) converges absolutely. 

Under the above assumptions on $R^*$ and $R_*$   a series of class $\mathcal V(X)$ is called {\it convergent}  if it converges absolutely in the set
\begin{equation}\label{e.sss}
\mathcal U_{\bf V}(\e) = \left\{ X : |X|^{R_*} \le \e, |X|^{R^*} \le \e, |x_1| \le \e, |x_2| \le \e \right\},
\end{equation}
for some $\e > 0$. As explained in detail in \cite{B}, this subset of the real plane is a natural domain of convergence for such a series. As an example we notice that when the sector $\bf V$ is defined by the vectors $R_*=(1,0)$ and $R^*=(0,1)$,  i.e., 
coincides with the first quadrant,  then  the class ${\mathcal V}(X)$ coincides with the class of usual power series with nonnegative exponents and the set $\mathcal U_{\bf V}(\e)$  coincides with the bidisc of radius $\e$. 

Let $\bf V$ be a sector which determines system (\ref{e:br0}). We consider  changes of variables of the form

\begin{equation}\label{e:br1}
x_i = y_i + y_ih_i(Y),  \ \ i=1,2,
\end{equation}
where $h_i \in {\mathcal V}(Y)$, i.e., $h_i(Y) = \sum_{Q\in \bf V} h_{iQ} Y^Q$. In the new coordinates the system 
takes the form 
\begin{equation}\label{e:br2}
y_i = \lambda_i y_i + y_i g_i(Y), \ \ i=1,2.
\end{equation}

\medskip {\it

\noindent {\bf Second Normal Form} \cite[Ch. II, \S 2, Thm 1, p. 128]{B}: Suppose that $\bf V$ is a sector as described above. Then system \eqref{e:br0} can be transformed by a formal change of variables~(\ref{e:br1})  into a normal form (\ref{e:br2})  
with $g_i \in {\mathcal V}(Y)$. The coefficients of $g_i$ satisfy $g_{iQ}=0$ if $\langle Q, \Lambda \rangle \ne 0$. 
}

\medskip

The normalizing change of coordinates in the above theorem in general is not convergent, even if system \eqref{e:br0} is analytic. However, such a change of coordinates is always convergent or $C^\infty$-smooth in $\mathcal U_{\bf V}(\e)$.  For this reason the behaviour of the integral curves of systems~\eqref{e:br0}  and~\eqref{e:br2} coincide in the sector given by~\eqref{e.sss} 
for sufficiently small $\e>0$.

\medskip

The third theorem deals with the case somewhat intermediate with respect to the two previous theorems.
Let $\bf V$ be the sector in \eqref{e:br0} defined as above by the vectors $R^*$ and $R_*$. Assume that 
$R^*=(r^*_1, r^*_2)$,  $R_*=({r_1}_*, -1)$ with $r^*_1 < 0 < r^*_2$, ${r_1}_*>0$, and 
$|r^*_1/r^*_2|<{r_1}_*$. Note that the conditions on $r^*_1$, $r^*_2$, and ${r_1}_*$ exactly mean 
that $R^*$ and $R_*$ are in the second and forth quadrants respectively and the angle of ${\bf V}$ is less 
than $\pi$. 

The  additional  assumption which we impose is  that the  expressions on the right-hand side of~\eqref{e:br0} are the series 
in integer nonnegative powers of $x_2$. Since the series $f_1(X)$ does not contain negative powers of $x_2$, the coefficient  
$f_{1Q}$ in $f_1(X)$ vanishes unless the vector $Q$ lies in the sector
$$
_1{\bf V} = \left\{Q: Q=\alpha_1 R^* + \alpha_2 \cdot (1,0), \  \alpha_1, \alpha_2 \ge 0 \right\}.
$$

Denote by $_1\mathcal V(X)$ the class of such series $f_1$. Furthermore, since $x_2f_2(X)$ also does not contain 
negative powers of $x_2$, the coefficient $f_{2Q}$ in $f_2(X)$ of \eqref{e:br0}
will vanish unless the vector $Q$ lies either in $_1{\bf V}$, or along the ray $\{q_2=-1$, $q_1 \ge {r_1}_*\}$.
Denote the class of series $f_2$ satisfying this property by $_2\mathcal V(X)$. 

Sector $_1{\bf V}$ corresponds to the set
\begin{equation}\label{e.U}
_1\mathcal U (\e) = \left\{X: |X|^{R^*} \le \e, |x_1|\le \e\right\},
\end{equation}
and power series in $_1\mathcal V(X)$ are called convergent if they converge absolutely in some  $_1\mathcal U (\e)$.
Observe that $_1{\bf V}$ is contained in $\bf V$  and that $_1\mathcal U (\e)$ 
contains the sector $\mathcal U_{\bf V}(\e)$ given by~\eqref{e.sss}.

\medskip {\it
 
\noindent {\bf Third Normal Form} \cite[Ch. II, \S2, Thm 2, p. 134]{B}: 
If the series $f_i$ in \eqref{e:br0} are of class $_i{\mathcal V}(X)$, then there exists a
formal change of coordinates \eqref{e:br1}, where the $h_i$ are series of class $_i{\mathcal V}(Y)$, which
transforms \eqref{e:br0}  into  system \eqref{e:br2} in which the $g_i$ are series of class $_i{\mathcal V}(Y)$ 
consisting only of terms $g_{iQ}Y^Q$ satisfying $\langle Q, \Lambda \rangle =0$. 
}

\medskip

Analogous statement also holds if we interchange the role of variables $x_1$ and $x_2$.
Furthermore, it is shown in \cite{B} that the behaviour
of the integral curves of system~\eqref{e:br0} and the normal form~\eqref{e:br2} coincide in the region given by~\eqref{e.U} similarly to the Second Normal Form Theorem.

The advantage of the Third Normal Form over the Second Normal Form is that it describes the behaviour of integral
curves on a bigger region, albeit for a smaller class of power series. 

Methods of integration of systems given in the above normal forms are carefully described in~\cite{B}. This makes it possible
to construct the local phase portrait of these systems.

\subsection{The Newton diagram.} 
Let $\mathcal X$ be a real analytic vector field on $\rl^2$ given by 
\begin{equation}\label{e:sample}
\left\{
\begin{array}{c}
\dot t = \sum_{j+k >1} \alpha_{jk}t^j s^k =t f_1(t,s)\\
\dot s = \sum_{j+k >1} \beta_{jk}t^j s^k =s f_2(t,s) .
\end{array} \right.
\end{equation}
Of course, this notation for components of $\mathcal X$ is independent of the notation of Section 4 where $f$ was 
the map defined in Section~4.2. We write
\begin{equation}\label{e:f_j}
f_j(t,s) = \sum_Q f_{jQ}(t,s)^Q,
\end{equation}
where $Q = (q_1,q_2)$, and $(t,s)^Q=t^{q_1}s^{q_2}$. 
{\it The support} $\bf D$ of $\mathcal X$ is the set of points $Q = (q_1,q_2)$ in $\mathbb Z^2$ such that 
$\vert f_{1Q}\vert + \vert f_{2Q}\vert \neq 0$. Fix a vector $P \in \rl^2$ and put 
$c = \sup_{Q \in \bf D} \langle Q,P \rangle$; here $\langle \bullet,\bullet \rangle$ denotes the euclidean inner product. 
The set 
$$L_P = \{ Q \in \rl^2: \langle Q,P \rangle = c \}$$
forms the {\it support line} $L_P$ of $\bf D$ with respect   to the vector $P$, while the set 
$$L_P^{(-)} = \{ Q \in \rl^2: \langle Q,P \rangle \leq c \}$$
defines the {\it support half-space} $L_P^{(-)}$ corresponding to the vector $P$.

{\it The Newton polygon} $\Gamma$ is defined as the intersection of all support half-spaces of ${\bf D}$, i.e.,
$$
\Gamma = \bigcap_{P \in \mathbb R^2 \setminus \{0\}} L_P^{(-)}.
$$
It coincides with the closure of the convex hull of $\bf D$ (see \cite{B}). Its boundary consists of edges,
which we denote by $\Gamma_j^{(1)}$, and vertices, which we denote by $\Gamma_j^{(0)}$, where
$j$ is some enumeration. In this notation the upper index expresses the dimension of the object.

Part of the boundary of $\Gamma$, called the {\it Newton diagram} or the {\it open Newton polygon} in the terminology of \cite{B}, denoted by $\hat \Gamma$, plays an important role in the theory of power
series transformations. For simplicity we consider only the case relevant to us when $\bf D$ is contained in the set 
$\{ Q = (q_1,q_2): q_j \geq -1 , j = 1,2\}$.
Then the Newton diagram can be constructed explicitly  as follows.  
Let $q_{2*} = \min \{ q_2: (q_1,q_2) \in {\bf D}\}$. Then $x_2 = q_{2*}$ is  the horizontal support line to $\bf D$. Set  $q_{1*} = \min \{ q_1: (q_1,q_{2*}) \in {\bf D} \}$. 
The point $\Gamma^{(0)}_1:= (q_{1*},q_{2*})$ is the left boundary  point of the intersection of ${\bf D}$ with 
the horizontal support line $q_2 = q_{2*}$. Consider the support line $L_P$ for ${\bf D}$ through 
$\Gamma^{(0)}_1$ satisfying the following assumptions: 
\begin{itemize}
\item[(i)] $P = ( p_1,p_2)$ with $p_1 < 0$ and $ p_2 < 0$; 
\item[(ii)] $L_P$  contains at least one other point of ${\bf D}$. 
\end{itemize}
The first assumption means that the  line $L_p$ admits a normal vector which lies in the third quadrant. In particular, 
$L_P$ is not a horizontal or vertical line. Clearly, these two conditions define  such a support line  uniquely. If the line $L_P$ 
does not exist, our procedure stops on this first step and we set $\hat\Gamma = \{\Gamma^{(0)}_1\}$, that is the Newton 
diagram consists of a single vertex.  Otherwise denote by $\Gamma^{(0)}_2$ the left
boundary point of the intersection of ${\bf D}$ with $L_P$. Consider now the support line through
$\Gamma^{(0)}_2$ with the above properties (i) and (ii); hence, it contains a point of ${\bf D}$ different from
$\Gamma^{(0)}_1$. Continuing this procedure we arrive to the point $Q^* = (q_1^*,q_2^*)$ which is the
lowest point of ${\bf D}$ on the left vertical support line of ${\bf D}$,
i.e., $q_1^* = \min \{ q_1: (q_1,q_2) \in {\bf D } \}$ and $q_2^* = \min \{ q_2:
(q_1^*,q_2) \in {\bf D}\}$. Denote this last point by $\Gamma^{(0)}_k$. For every $j =1,...,k-1$ 
we denote by $\Gamma_j^{(1)}$ the edge joining the vertices $\Gamma_j^{(0)}$ and $\Gamma_{j+1}^{(0)}$. Thus by
construction, the points $\Gamma^{(0)}_1$ and $\Gamma^{(0)}_k$ are joined by the Newton
diagram $\hat \Gamma$.

\medskip

It is important to notice here that all edges and vertices of the Newton diagram $\hat \Gamma$
are edges and vertices of the Newton polygon $\Gamma$, but in general, not all edges and vertices of $\Gamma$ are edges and vertices of $\hat\Gamma$. Consider some examples.

\medskip

{\it Example 1.} Let $\bf D$ consist of two points $(1,1)$ and $(1,2)$. Then the Newton diagram consists of a single vertex $\Gamma^{(0)}_1 = (1,1)$.

\medskip

The next example will occur in Section 6.

\medskip

{\it Example 2.}  Let $\bf D$ consist of three points $(2,0)$, $(4,0)$ and $(0,2)$. Then the Newton diagram is formed
by two vertices $\Gamma^{(0)}_1 = (2,0)$, $\Gamma^{(0)}_2 = (0,2)$, and one edge $\Gamma^{(1)}_1$, which is 
the segment joining these vertices.

\subsection{Nonelementary singularity.}\label{s.strategy}
Bruno's method for construction of the phase portrait of a vector field near a nonelementary 
singular point can be described as follows. 
For each element $\Gamma_j^{(d)}$ of the Newton diagram associated with \eqref{e:sample}, there is a corresponding sector 
$\mathcal U^{d}_j$ in the phase space $\rl^2_{(t,s)}$, so that together they form a neighbourhood of 
the origin (here boundaries of the sectors are not necessarily integral curves). In each $\mathcal U^{0}_j$ 
one brings the system to a normal form, and in $\mathcal U^{1}_j$ one uses power transformations 
(quasihomogeneous blow-ups) to reduce the problem to the study of elementary singularities of the 
transformed system. This allows one to determine the behaviour of the orbits in each sector applying the above Normal Form theorems and using a  careful study of integral curves for all types of normal forms in~\cite{B}. After that the 
results in each sector are glued together to obtain the overall phase portrait of the system near the origin.

We now consider some important special cases corresponding to particular elements of the Newton diagram. 

{\bf Case of a vertex.}  Let $Q =\Gamma^{(0)}_j$ be  a vertex of the Newton diagram. Consider the edges 
$\Gamma_{j-1}^{(1)}$ and   $\Gamma_{j}^{(1)}$ adjacent to $Q$ in the Newton diagram. Next, consider the 
{\it unit}  (i.e., their coordinates are coprime integers)  vectors $R_{j-1}=(r_{1,j-1},r_{2,j-1})$ and 
$R_{j}=(r_{1,j},r_{2,j})$ directional to $\Gamma_{j-1}^{(1)}$ and   $\Gamma_{j}^{(1)}$ respectively. 
We impose here the restrictions $r_{2,j-1} > 0$ and $r_{2,j} > 0$ so these vectors are determined uniquely. 
Set $R_* = -R_{j-1}$ and $R^* = R_j$. In the special case when $Q$ is a boundary point of $\hat \Gamma$, one 
of the adjacent edges does not exist, so if $Q$ is the right boundary point  $Q_*$, we set $R_* = (1,0)$, and if $Q$ is the left 
boundary point  $Q^*$, we put $R^* = (0,1)$.

The method of \cite{B} associates to $Q$ a set defined by 
\begin{equation}
\mathcal U^{(0)}_j (\e) = \{ (t,s)\in \rl^2 : (|t\vert,\vert s|)^{R^*} \le \e, \ (|t\vert,\vert s|)^{R_*} \le \e, |t\vert \le \e, |s| \le \e  \},
\end{equation}
for some $\e > 0$.  System~\eqref{e:sample}, after the change of the old time variable $\tau$ with the new 
time variable $\tau_1$ satisfying $d\tau_1=(t,s)^{Q}d\tau$, is of form \eqref{e:br0}. Furthermore, the vectors $R^*$ and $R_*$ defined above by the adjacent edges at $Q$,  will generate for this  new system  \eqref{e:br0} the convex cone ${\bf V}$ as described  in the previous subsection, so the notation is consistent. The obtained system satisfies the assumptions of the Principal or the Second Normal Form Theorem. The behaviour of the integral curves of the normal form and the original system coincides in $\mathcal U^{(0)}_j (\e)$ for $\e$ sufficiently small.

A particularly simple case occurs when $Q=(q_1,q_2)=\Gamma^{(0)}_j$ is the first (i.e., the right) or the last (i.e., the left) 
point of $\hat \Gamma$, and $Q$ is not contained in the first quadrant (Type I according to classification in~\cite[p.~138]{B}). 
In this situation one of the coordinates of $Q$ equals $-1$. Say, if $q_2 = -1$,  i.e., $ Q$ is the right point of $\hat\Gamma$, then one takes $R_*=(1,0)$ according to the general rule stated above. 
The corresponding normal form has vertical integral curves. It follows that the original 
system~\eqref{e:sample} in the set
$$
\mathcal U_*(\e)=\{(t,s)\in \rl^2 : (|t\vert,\vert s|)^{R^*} \le \e, |t|\le \e \}
$$
does not have any integral curves terminating at the origin. Similarly, if $q_1=-1$, i.e., if $Q$ is the left point of $\Gamma$, then $R^*=(0,1)$, 
and again in 
$$
\mathcal U^*(\e)=\{(t,s)\in \rl^2 : (|t\vert,\vert s|)^{R_*} \le \e, |s|\le \e \}
$$
the system does not have any characteristic orbits.

{\bf Case of an edge. } Suppose now that $\Gamma^{(1)}_j$ is an edge of $\hat \Gamma$. Let $R=(r_1, r_2)$ , $r_2 > 0$ be a unit directional
vector of $\Gamma^{(1)}_j$. The corresponding set in the phase space is given by
\begin{equation}
\mathcal U^{1}_j(\e) = \{(t,s)\in \rl^2 : \e \le (|t\vert,\vert s|)^R \le 1/\e, \ \ |t|\le \e, |s| \le \e \}.
\end{equation}
Consider the power transformation given by $y_1 = t^{k_1} s^{k_2}, y_2 = t^{r_1}s^{r_2}$, where the integers 
$k_1,k_2$ are chosen such that the matrix 
\begin{equation}
A = \left(
\begin{array}{cc}
k_1 & k_2\\
r_1 & r_2
\end{array}\right)
\end{equation}
has the determinant equal to 1. In the matrix form, we can write $X=(t,s)$, 
$$Q = \left(\begin{array}{c} q_1 \\ q_2  \end{array}\right),$$ 
$$F_Q = \left(\begin{array}{c} f_{1q} \\ f_{2q} \end{array}\right).$$
Then~\eqref{e:sample} can be given by 
\begin{equation}\label{e:ex1}
\dot{(\ln X)} = \sum_{Q\in {\bf D}} F_Q X^Q,
\end{equation}
where $X^Q = t^{q_1}s^{q_2}$. The power transformation can be expressed now as $Y=X^A$ 
taking~\eqref{e:ex1} into
$$
\dot{(\ln Y)} = \sum_{Q'\in D'} F'_{Q'} Y^{Q'},
$$
with $Y=(y_1,y_2)$, $Q'=(A^t)^{-1}Q$, $D'=(A^t)^{-1}{\bf D}$ (the superscript $t$ stands for 
transposition), and $F'_{Q'}=AF_Q$. After division by the maximal power of $y_1$ one obtains a
new system. Here the $y_2$-axis corresponds to $\{t=s=0\}$ in the original coordinates, and therefore 
one needs to investigate the new system in a neighbourhood of the $y_2$-axis.  Quite often the topological 
behaviour of the system in $\mathcal U^{1}_j(\e)$ can be determined 
by considering the {\it truncation} of the system which is obtained by taking the sum in~\eqref{e:f_j} only 
over the vertices contained in $\Gamma^{(1)}_j$. The detailed discussion is in \cite{B}, pp. 140-141.  For instance, 
in the situation which we will encounter below, the truncated system will have an elementary singularity. In 
general, the singularities of the new system can be nonelementary, but they are simpler than those of the original 
system. Therefore, the general method described above can be applied  and  an induction procedure can
be used.

We do not go into further details since the goal of this section is just to outline the strategy of the employed method. 
The computations of the next sections will strictly follow the presented method and, as we hope, will clarify the details.

\section{Phase portrait of the standard umbrella}\label{s:gen-hyp-s}

Since the standard umbrella corresponds to the nongeneric case where $\phi$ is the identity map, we study its 
characteristic foliation separately. We rewrite system (\ref{e:hyp-s}) in the form 
\begin{equation}\label{e:hyp1}
\left\{
\begin{array}{l}
\dot t = t(-3t^2-s^2-3t^4) = tf_1(t,s)\\
\dot s = s(s^2+4t^2  + 7 t^4) = sf_2(t,s), 
\end{array}\right.
\end{equation}
and set
$$f_j(t,s) = \sum_Q f_{jQ}(t,s)^Q,$$
where $Q = (q_1,q_2)$ is  the multi-index with integer entries, and $(t,s)^Q=t^{q_1}s^{q_2}$.

\begin{figure}[htb]
\begin{center}
\leavevmode
\includegraphics[scale=0.6, trim = 20mm 30mm 20mm 10mm, clip]{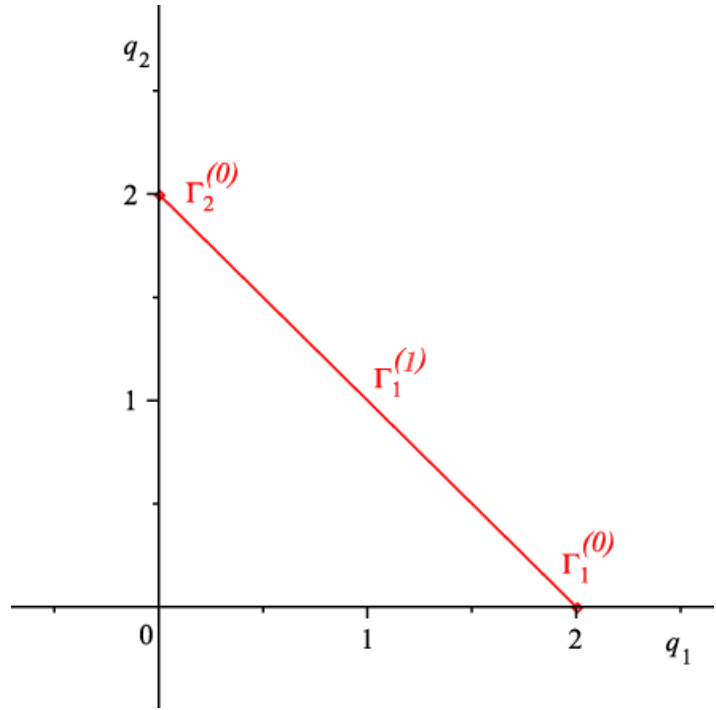}
\end{center}
\caption{The Newton diagram for \eqref{e:hyp1}.}
\label{fig1}
\end{figure}

The Newton diagram $\hat\Gamma$ consists of two vertices $\Gamma^{(0)}_1 = (2,0)$ and
$\Gamma^{(0)}_2 = (0,2)$ and the line segment (edge) $\Gamma_1^{(1)}$  between them (see Fig.~\ref{fig1}). We point out that the point $(4,0)$ lies in the support  $\bf D$  but does not belong to the Newton diagram $\hat \Gamma$.
For each element of the Newton diagram (the two vertices and the edge), there is a corresponding sector in the 
phase space $\rl^2_{(t,s)}$, so that together they form a neighbourhood of the origin. Accordingly we consider 
3 cases.

\medskip

{\it Case 1.} First consider the vertex $(2,0)$. Following the strategy outlined in Section~\ref{s.strategy}, 
we set $R_*=(1,0)$, and $R^*=(-1,1)$. We can make the 
change of time $d \tau_1 =  t^2 d \tau$. This yields the system
\begin{equation}\label{e:h1}
\left\{\begin{array}{l}
\frac{dt}{d\tau_1} = -t (3+t^{-2}s^{2}+3t^2)= -3t + t f_1(t,s)\\
\frac{ds}{d\tau_1} = s (4 + t^{-2}s^2 + 7t^2) = 4s + s f_2(t,s).
\end{array}\right.
\end{equation}
The Newton diagram ${\hat \Gamma}$  corresponding to  \eqref{e:h1} has vertices
$(-2,2)$ and $(2,0)$, in particular, it is 
contained in the sector $\bf V$ (with the angle $< \pi$) bounded by the rays generated by 
$R_*$ and $R^*$. Therefore, for sufficiently small $\e$, in the sector
$$
{\mathcal U}^{(0)}_1 = \{ (t,s)\in \rl^2: (\vert t \vert ,\vert s \vert)^{R_*} \le \e,\ (\vert
t \vert,\vert s \vert)^{R^*} \le \e \} 
= \{ |t| \le \e,\  |s|\le \e |t|\},
$$
there exists a smooth change of variables $(t,s)$ putting the initial system to  the Second Normal 
Form of Bruno. In the new coordinates  the system has the form
\begin{equation}\label{e:ttt}
\left\{\begin{array}{l}
\dot{y_1} = -3y_1+ y_1 \sum g_{1Q} (y_1,y_2)^Q\\
\dot{y_2} = 4 y_2 + y_2 \sum g_{2Q} (y_1,y_2)^Q,
\end{array}\right.
\end{equation}
where the coefficients $g_{1Q}$ and $g_{2Q}$ are 
all zero except those for which $-3 q_1 + 4q_2 = 0$. The line $L:= \{ -3y_1
+ 4y_2=0\}$ determined by the linear part of system~\eqref{e:ttt} intersects the interior of the 
sector~$\bf V$ (see Fig.~\ref{fig2}).  It follows (see Bruno \cite{B}, p. 132) that the system defined 
by \eqref{e:ttt}, and hence by \eqref{e:h1}, is a saddle, i.e., each ray $\{y_1 = 0, y_2>0\}$, $\{y_1 > 0, y_2=0\}$ is an 
integral curve, and in each quadrant in $\rl^2$, the integral lines are homeomorphic 
to hyperbolas. This is the description of system~\eqref{e:hyp}
in sector ${\mathcal U}^{(0)}_1$.

\begin{figure}[htb]
\begin{center}
\leavevmode
\includegraphics[scale=0.6, trim = 5mm 20mm 20mm 20mm, clip]{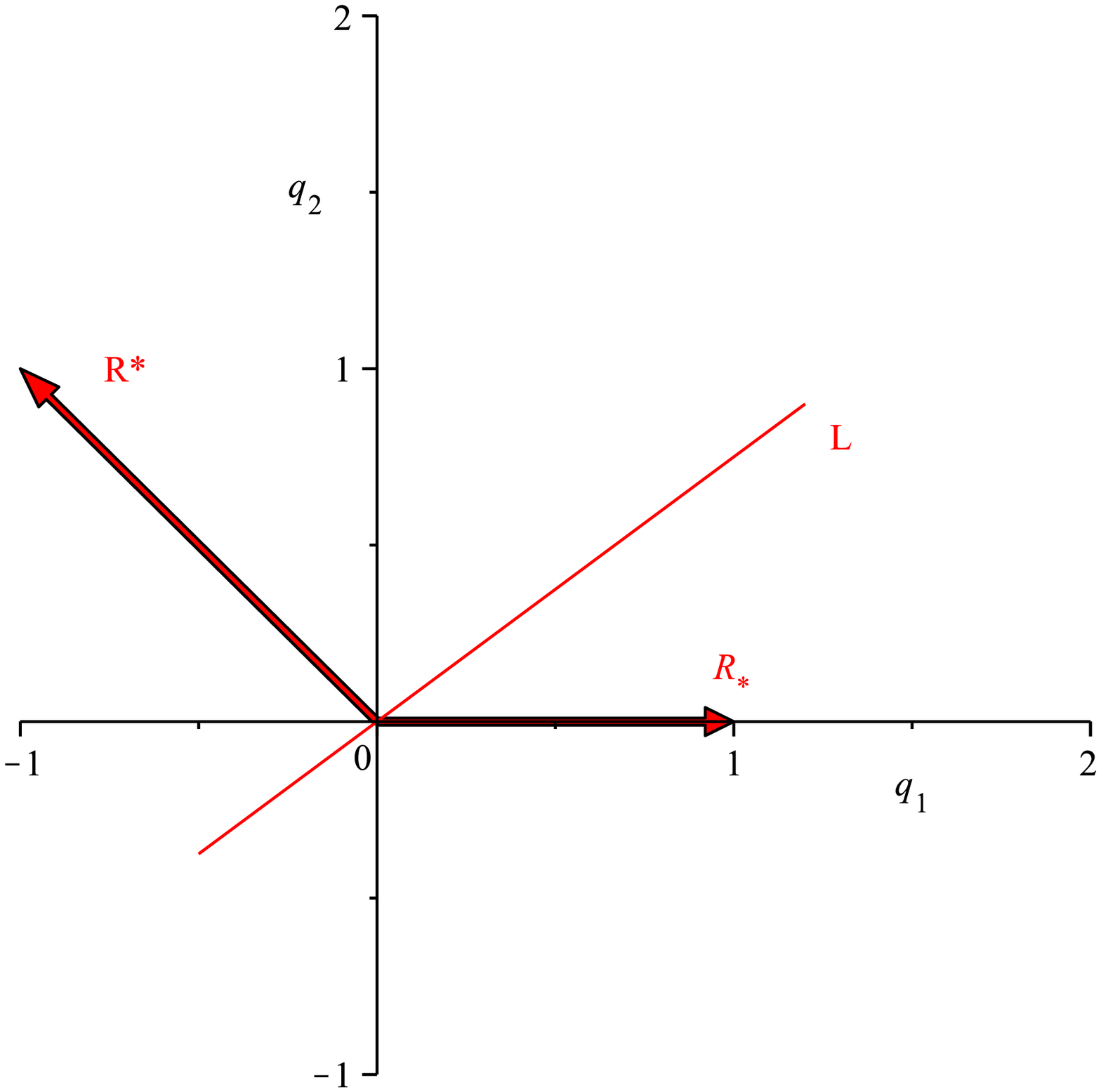}
\end{center}
\caption{Case 1 for \eqref{e:hyp1}.}
\label{fig2}
\end{figure}

\medskip

{\it Case 2.} Consider now the second vertex $(0,2)$.  Here we have 
$R_*=(1,-1)$  and $R^*=(0,1)$.  The corresponding sector where the 
change of dependent variables will be performed is given by
$$
{\mathcal U}^{(0)}_2 =  \{ (t,s)\in \rl^2: (\vert t \vert ,\vert s \vert)^{R_*} \le \e,\ (\vert
t \vert,\vert s \vert)^{R^*} \le \e \}  = 
\left\{|s| \le \e,\  |s|\ge \frac{|t|}{\e} \right\}.
$$
The change of time $d\tau_1 = s^2 d\tau$ transforms
system~\eqref{e:hyp} into
\begin{equation}\label{e:h2}
\left\{\begin{array}{l}
\frac{dt}{d\tau_1} = -t +  t(3t^{2}s^{-2}+3t^4s^{-2})\\
\frac{ds}{d\tau_1} = s  + s(4t^{2}s^{-2} + 7t^4s^{-2}).
\end{array}\right.
\end{equation}
As above, there exists a smooth change of variables $(t,s)$ putting this system
to  the second normal form:
$$
\left\{\begin{array}{l}
\dot{y_1} = -y_1+ y_1 \sum g_{1Q} (y_1,y_2)^Q\\
\dot{y_2} = y_2 + y_2 \sum g_{2Q} (y_1,y_2)^Q,
\end{array}\right.
$$
where the coefficients $g_{1Q}$ and $g_{2Q}$ are 
all zero except those which belong to the line  $L:= \{ -q_1 +q_2 = 0\}$. This line intersects the
sector $\bf V$ bounded by $R_*$ and $R^*$ which implies  that this system is 
again a saddle. This gives the phase portrait of~\eqref{e:hyp} in sector ${\mathcal U}^{(0)}_2$

\begin{figure}[htb]
\begin{center}
\leavevmode
\includegraphics[scale=0.6, trim = 5mm 5mm 20mm 30mm, clip]{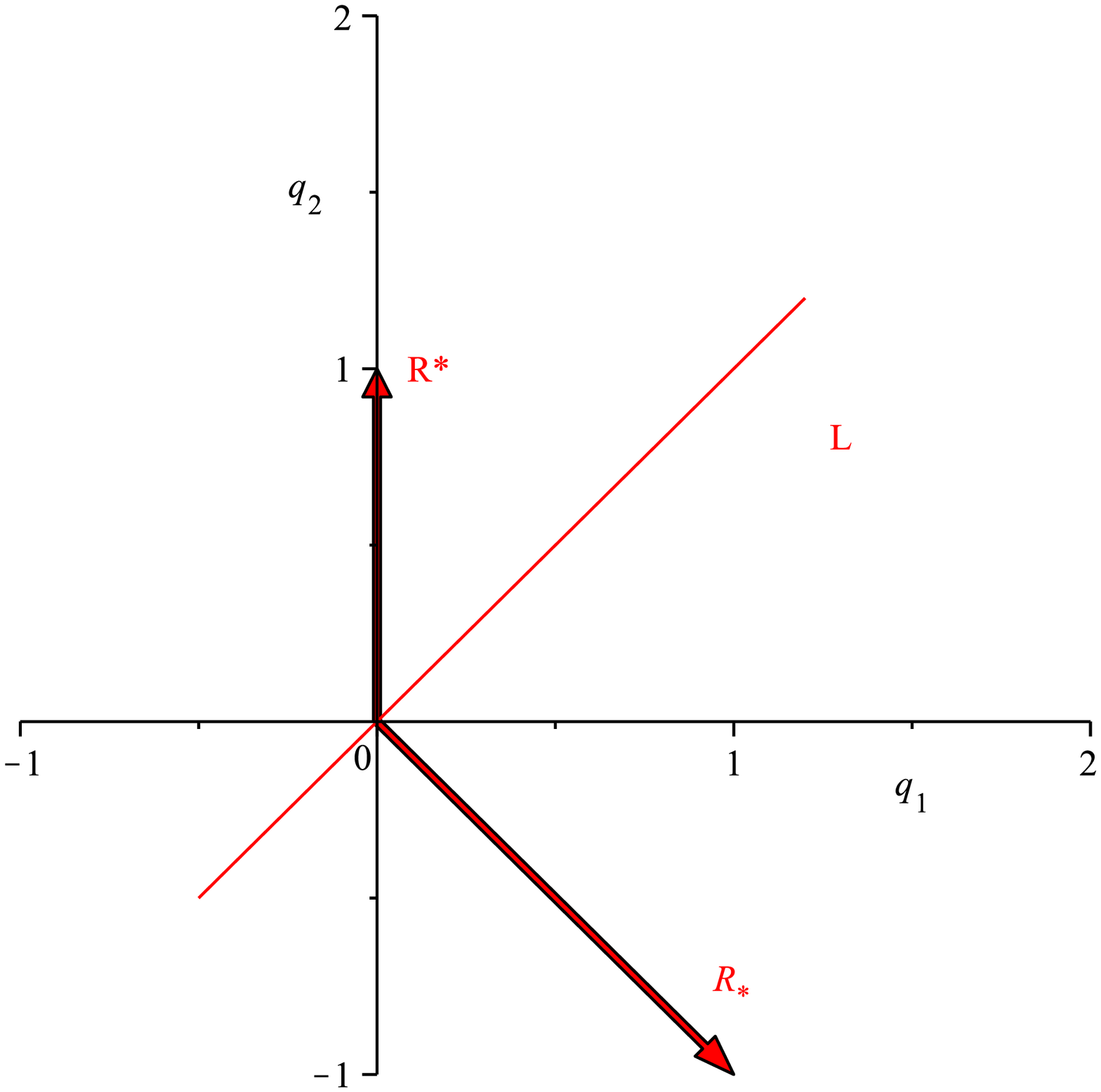}
\end{center}
\caption{Case 2 for \eqref{e:hyp1}.}
\label{fig3}
\end{figure}

\medskip

{\it Case 3.} The remaining case of the edge between $(2,0)$ and $(0,2)$ will
correspond to the sector ${\mathcal U}^{(1)}_1$, which is the complement of 
${\mathcal U}^{(0)}_1 \cup {\mathcal U}^{(0)}_2$. We make the following change of variables
\begin{equation}\label{e:ch3-s}
\left\{\begin{array}{l}
y_1 = t,\\
y_2 = t^{-1}s.
\end{array}\right.
\end{equation}
In the matrix form, we write $X=(t,s)$, and the change
of variables~\eqref{e:ch3-s} can be expressed as $Y=X^A$ with the 
matrix of exponents
$$
A = \left(\begin{array}{cc}
1 & 0 \\ -1 & 1 
\end{array}\right).
$$
Then system~\eqref{e:hyp} takes the form
\begin{equation*}
\left\{\begin{array}{l}
\dot{y_1} = y_1(-3y_1^2 - y_1^2y_2^2-3y_1^4) \\
\dot{y_2} = y_2(7y_1^2+2y_1^2y_2^2+10y_1^4).
\end{array}\right.
\end{equation*}
The edge of $\hat\Gamma$ becomes vertical in the new system.
Performing as above a change of time, we may divide both sides by $y_1^2$ to obtain
\begin{equation}\label{e:h3}
\left\{\begin{array}{l}
\dot{y_1} = -3y_1-y_1y_2^2-3y_1^3 = y_1(-3- y_2^2-3y_1^2) \\
\dot{y_2} = 7y_2+2y_2^3+10y_1^2y_2 = y_2(7+2y_2^2+10y_1^2).
\end{array}\right.
\end{equation}
Under the change of variables~\eqref{e:ch3-s}, the line $y_1=0$
corresponds to the origin, and therefore, we are interested in the 
integral curves of system~\eqref{e:h3} that intersect the line $y_1=0$
at points with $y_2\ne 0$. The set $\{y_1=0,\ \pm y_2>0\}$ are integral curves
of~\eqref{e:h3}, but they correspond to $t=s=0$ in the original system.
According to Bruno (\cite{B}, p. 141), the
points on the $y_2$ axis can be either simple points, in which case
the integral curves of~\eqref{e:h3} near such points are parallel to
the $y_2$-axis, or singular points. The truncation of system \eqref{e:h3}  (see the end of the 
previous section) contains only the terms that correspond to the edge under consideration and its 
vertices, and thus has the form 
\begin{equation}\label{e:h4}
\left\{\begin{array}{l}
\dot{y_1} =  y_1\hat f'_{\hat{10}}(y_2) \\
\dot{y_2} = y_2\hat f'_{\hat{20}}(y_2),
\end{array}\right.
\end{equation}
where $\hat f'_{\hat{20}}(y_2)= 7+2y_2^2$ (we follow the notation of \cite{B}). 
Singular points are determined
from the equation $\hat f'_{\hat{20}}(y_2)=0$.  In 
our case  $\hat f'_{\hat{20}}(y_2)$  is strictly positive.
Therefore, in~\eqref{e:h4} all points with $y_1=0, y_2\ne 0$ are
simple points. From this we conclude that in the sector ${\mathcal U}^{(1)}_1$ no
integral curves of system~\eqref{e:hyp} intersect the origin.

\begin{figure}[htb]
\begin{center}
\leavevmode
\includegraphics[scale=0.6, trim = 6mm 6mm 6mm 6mm, clip]{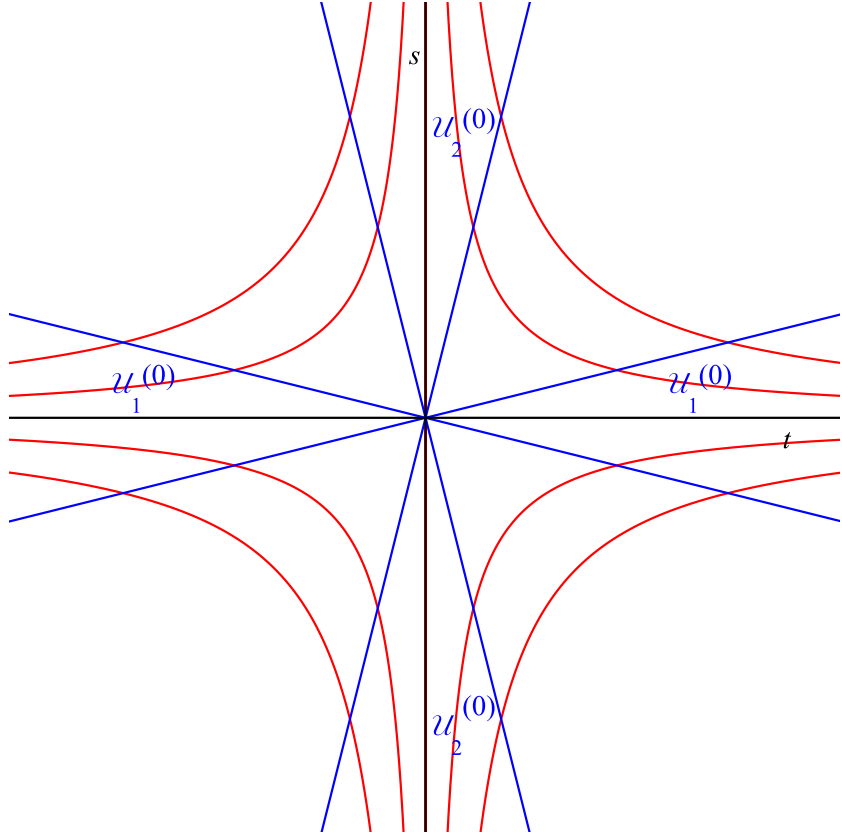}
\end{center}
\caption{Phase portrait of \eqref{e:hyp1}.}
\label{fig4}
\end{figure}

With this information the integral curves in all sectors can be glued together. It is readily verified that
the phase portrait of system~\eqref{e:hyp} is in fact a saddle,  the integral curves in each quadrant 
of $\rl^2$ are homeomorphic to hyperbolas and do not intersect the coordinate axes (see Fig.~\ref{fig4}).

\section{Phase portrait of umbrella in general position}\label{s:gen-hyp}

We now perform similar calculations for the algorithm to determine the topological structure near the origin 
of the dynamical system  defined by (\ref{e:system}). First of all we represent it in the canonical form

\begin{equation}\label{e:sup}
\left\{\begin{array}{l}
\dot t = t(-2g_{12}s + \alpha_{02}t^{-1}s^2 - 3g_{22}t^2 + o(\vert s \vert + \vert t^{-1}s^2 \vert + \vert t \vert^2 ))\\
\dot s = s(4g_{11}t^2 + \beta_{12}ts + \beta_{03}s^2+ 6g_{12}t^4s^{-1} + o(\vert t^2 \vert + \vert ts \vert + \vert s^2 \vert + \vert t^4 s^{-1} \vert)).
\end{array}\right.
\end{equation}
The Newton diagram $\hat\Gamma$ consists of 3 vertices $(-1,2)$,  $(0,1)$ and $(4,-1)$, and the two 
edges between them (Fig.~\ref{fig5}). Five cases should be considered each corresponding to a vertex 
or an edge of $\hat \Gamma$.

\begin{figure}[htb]
\begin{center}
\leavevmode
\includegraphics[scale=0.6, trim = 4mm 10mm 2mm 4mm, clip]{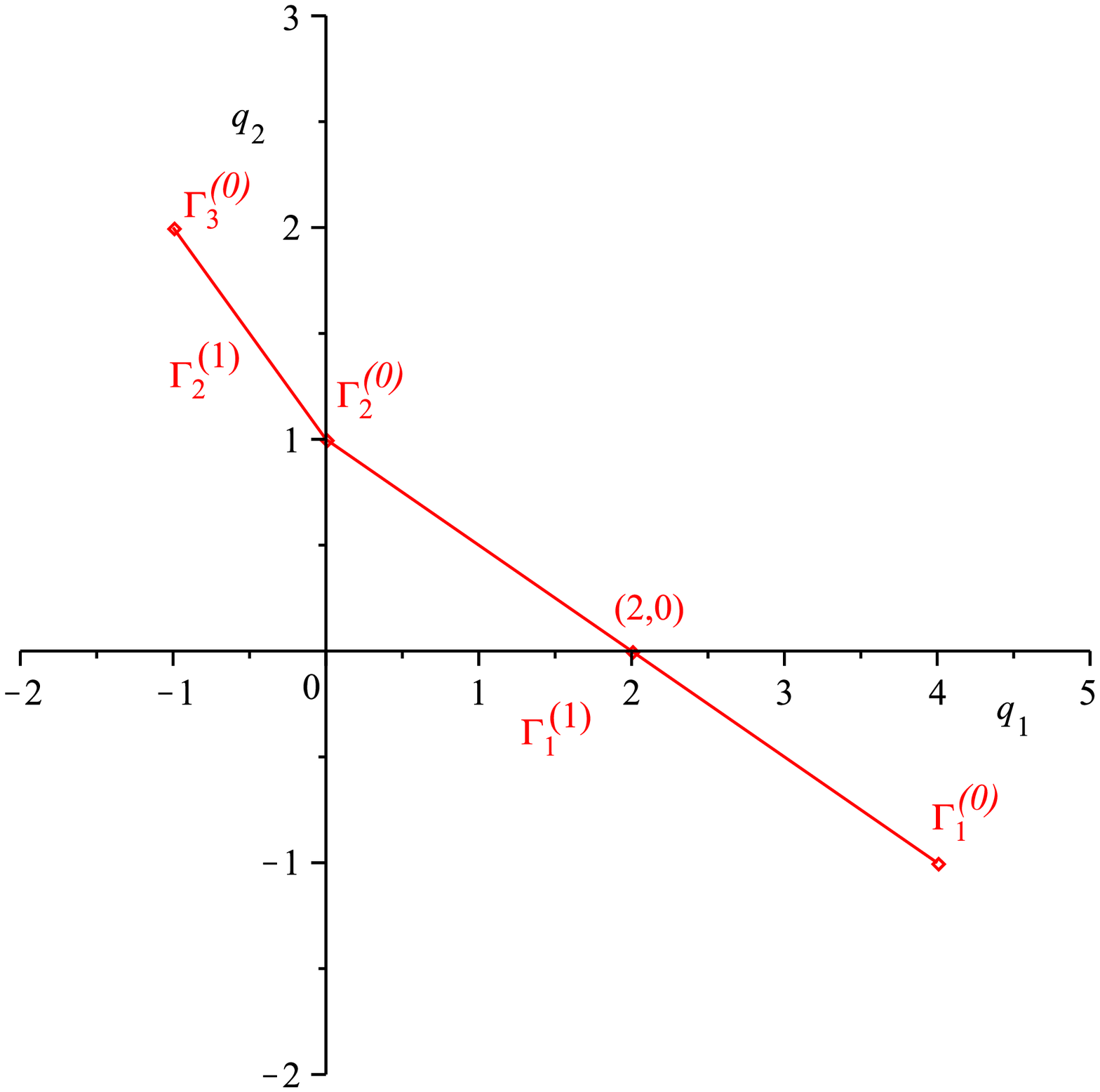}
\end{center}
\caption{The Newton diagram for \eqref{e:sup}.}
\label{fig5}
\end{figure}

\medskip

{\it Case 1.} Vertex $(4,-1)$. This corresponds to the situation discussed in Section~\ref{s.strategy}. We obtain immediately the behaviour of integral curves of the system. Namely, in the sector
$$
\mathcal U_1^{(0)} = \{ (t,s)\in \rl^2: (\vert t \vert ,\vert s \vert)^{(1,0)} \le \e,\ (\vert
t \vert,\vert s \vert)^{(-2,1)} \le \e \} 
= \{ |t| \le \e,\  |s|\le \e|t|^2\}
$$
the integral curves are vertical, in particular, there are no curves terminating at the origin.

\medskip

{\it Case 2.} Vertex $(-1,2)$. Again the same analysis works here. Since $(-1,2)$ is the end
point of $\hat \Gamma$, i.e., of Type I in \cite[p.~138]{B}, it follows from \cite{B} that in 
$$
\mathcal U_3^{(0)} = \{ (t,s)\in \rl^2: (\vert t \vert ,\vert s \vert)^{(0,1)} \le \e,\ (\vert
t \vert,\vert s \vert)^{(1,-2)} \le \e \} 
= \{ |s| \le \e,\  |t|\le \e|s|^2\}
$$
the integral curves are horizontal, and no curves terminate at the origin.

\medskip

{\it Case 3.} Vertex $(0,1)$. This is Type III in \cite[p.~139]{B}.
After a change of time
so that $d\tau_1 = s d\tau$, the system takes the form
\begin{equation}\label{e:c3}
\left\{\begin{array}{l}
\dot t = t(-2g_{12} + \alpha_{02}t^{-1}s - 3g_{22}t^2 s^{-1}+ o(1 + |t^{-1}s|+ \vert t^2s^{-1}|))\\
\dot s = s(4g_{11}t^2 s^{-1} + \beta_{12}t + \beta_{03}s+ 6g_{12}t^4s^{-2} +o(|t^2s^{-1}|+|t|+|s|
+|t^4s^{-2}|)).
\end{array}\right.
\end{equation}
There are two sectors which can be assigned to vertex $(0,1)$. One of them is determined by
$R_* = (2,-1)$ and $R^*=(-1,1)$, and equals 
$$
\mathcal U_2^{(0)} = \{ (t,s)\in \rl^2: (\vert t \vert ,\vert s \vert)^{R^*} \le \e,\ (\vert
t \vert,\vert s \vert)^{R_*} \le \e \}.
$$
We may apply here the Second Normal Form of Bruno. Since we consider a generic case,  we have  
$\lambda_1 = - 2g_{12} \neq 0$. Further, $\lambda_2=0$, because the second equation has no free term. 
Recall that we use the notation $\Lambda = (\lambda_1, \lambda_2)$. The line $L$  determined by
\begin{equation}\label{e:lam}
L = \{ Q = (q_1,q_2) \in \rl^2: \langle Q, \Lambda \rangle=0 \} = \{ q_1 = 0 \}
\end{equation}
enters the interior of the sector bounded by $R^*$ and $R_*$. It follows that in $\mathcal U_2^{(0)}$ 
there are no integral curves terminating at the origin.

On the other hand, we may use the Third Normal Form of Bruno for \eqref{e:c3}. It
is valid on a bigger domain, namely, on 
$$
{}_2 \mathcal U_2^{(0)} = \{ (t,s)\in \rl^2: (\vert t \vert ,\vert s \vert)^{R_*} \le \e, |s|\le \e\} =
\{|t|^2 \le \e |s|, |s| \le \e\}.
$$
The region of the $(t,s)$-space where the dynamics takes place is given by 
$$
{}_2{\bf V} = \{Q: Q= a_1 R_* + a_2 \cdot (0,1),\ \  a_1, a_2 \ge 0\}.
$$
Now the line $L$ determined from \eqref{e:lam} enters ${}_2{\bf V}$ along its boundary, the $s$-axis. 
In general, this yields a complicated behaviour of the system in ${}_2 \mathcal U_2^{(0)}$. In fact, there
are four possibilities as described in \cite[p. 134 Case c)]{B}. So which case is it? The salvation comes from 
{\it Case 2} above: it describes the behaviour of the system in $\mathcal U_3^{(0)}$
(which is a subset of ${}_2 \mathcal U_2^{(0)}$ and a neighbourhood of the $s$-axis). According to {\it Case 2}, 
the integral curves are horizontal near the $s$-axis, which eliminates all possibilities but one. We conclude 
that no integral curves enter the origin in ${}_2\mathcal U_2^{(0)}$.

\medskip

{\it Case 4.} Edge connecting $(0,1)$ and $(-1,2)$. The corresponding sector is defined by 
$$\mathcal U_2^{(1)} = \{ Q \in \rl^2: \e \le (\vert t \vert, \vert s \vert)^{(-1,1)} \le \e^{-1} \}$$
(see \cite[p. 139]{B}). This case is subsumed by Case~3 above because 
$\mathcal U_2^{(1)} \subset {}_2\mathcal U_2^{(0)}$ in a suitable neighbourhood of the origin.

\begin{figure}[htb]
\begin{center}
\leavevmode
\includegraphics[scale=0.6, trim = 2mm 4mm 2mm 4mm, clip]{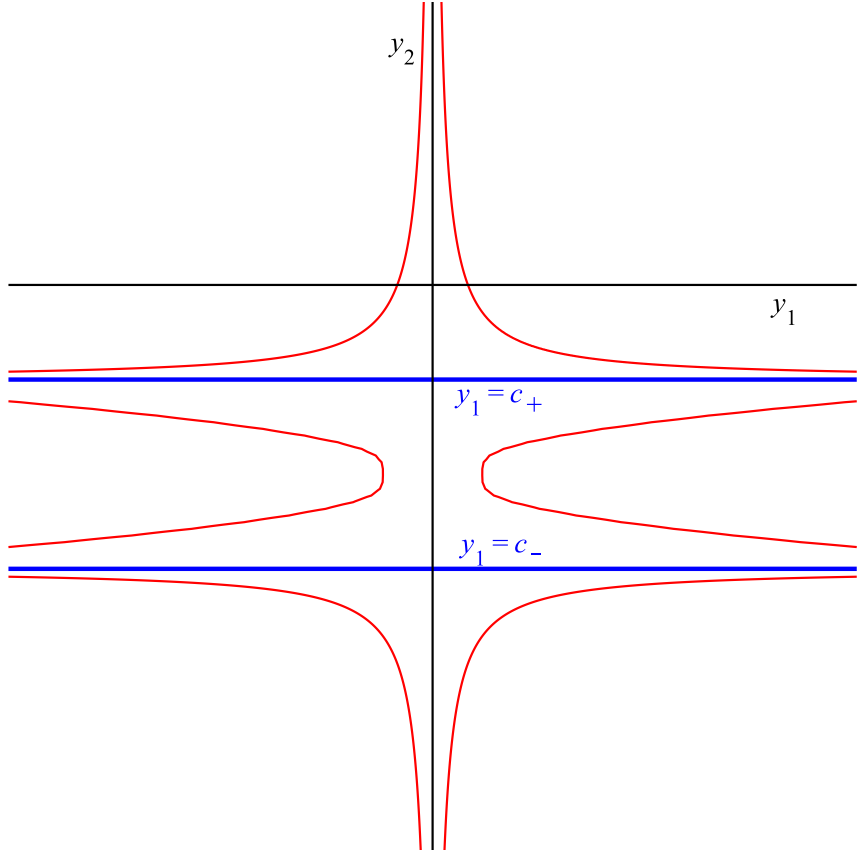}
\end{center}
\caption{Phase portrait in $y$-coordinates for $g_{12} > 0$.}
\label{fig6}
\end{figure}

\medskip

{\it Case 5.} Edge connecting $(0,1)$ and $(4,-1)$. We will consider the truncation of system \eqref{e:system},
i.e., we keep only terms that are related to the edge under consideration. We have
\begin{equation}\label{e:**}
\left\{
\begin{array}{l}
\dot t = t(-2g_{12}s-3g_{22}t^2)\\
\dot s = s(-4g_{11}t^2+6g_{12}t^4s^{-1}).
\end{array}\right.
\end{equation}
The directional vector is $R=(-2,1)$, and the sector in which the dynamics should be understood is 
\begin{equation}\label{e:U}
\mathcal U_1^{(1)}=\{(t,s) : \e \le (\vert t \vert,\vert s \vert)^{(-2,1)} \le \frac{1}{\e}, |t|, |s| \le \e\} = 
\{ \e |t|^2 \le |s|, \ |s| \le \frac{1}{\e}|t|^2, \ |t|\le \e, \ |s|\le \e\}.
\end{equation}
We need to make the following change of coordinates:
\begin{equation}\label{e:*}
\left\{
\begin{array}{l}
y_1=t\\
y_2=t^{-2}s,
\end{array}\right.
\end{equation}
which corresponds to the matrix 
$$
A = \left(\begin{array}{cc}1 & 0\\ -2 & 1\end{array}\right).
$$
In the new coordinates system \eqref{e:**} becomes
\begin{equation}
\left\{
\begin{array}{l}
\dot y_1 = y_1(-2g_{12}y_1^2y_2-3g_{22}y_1^2)\\
\dot y_2 = y_2(4g_{12}y_1^2y_2+(6g_{22}+4g_{11})y_1^2+6g_{12}y_1^2y_2^{-1}).
\end{array}\right.
\end{equation}
We divide by the maximal power of $y_1$, which equals 2 in this case, by performing the change of the independent 
variable: $ d\tau_1 = y_1^2 d\tau$. This yields
\begin{equation}
\left\{
\begin{array}{l}\label{e:***}
\dot y_1 = y_1(-2g_{12}y_2-3g_{22})\\
\dot y_2 = y_2(4g_{12}y_2+(6g_{22}+4g_{11})+6g_{12}y_2^{-1}).
\end{array}\right.
\end{equation}
This is the system of  Type I in \cite[p.~125]{B}. The $y_2$-axis is an integral curve, but it corresponds
to the origin in  \eqref{e:**}. Consider first the points  where the expression 
$4g_{12}y_2^2+(6g_{22}+4g_{11})y_2+6g_{12}$
is not zero; the integral curves near such a point are parallel to the $y_2$-axis. Going back to the original system via the 
inverse transformation to~\eqref{e:*}, we see that the $y_2$-axis blows down to the origin. Hence, these  integral curves  
do not terminate at zero in the original system. Now we need to investigate the situation near points where the above 
expression vanishes. For this we solve the quadratic equation
\begin{equation}\label{e:q}
2g_{12}y_2^2+(3g_{22}+2g_{11})y_2+3g_{12} =0.
\end{equation}
The discriminant of this equation  is 
$$
\mathcal D = 4g_{11}^2+9g_{22}^2+12g_{11}g_{22}-24g_{12}^2.
$$
Since $4g_{11}^2+9g_{22}^2 \ge 12 g_{11}g_{22}$, it follows that 
$\mathcal D \ge 24g_{11}g_{22} - 24g_{12}^2 = 24 \Delta >0$.  Here $\Delta$ is defined by \eqref{e:det}. Thus, equation \eqref{e:q} always has 
two simple roots:
$$
c_\pm = \frac{-(3g_{22}+2g_{11}) \pm \sqrt{4g_{11}^2+9g_{22}^2+12g_{11}g_{22}-24g_{12}^2}}{4g_{12}} .
$$
(since we consider the generic case, we can assume that $g_{12} \neq 0$). We point out that $c_\pm$ are either
both positive or both negative.  

We  need to investigate the dynamics near each point $(0,c_\pm)$. For that we first need to translate $c_\pm$
to the origin via
$$
z_1=y_1,\  y_2 = c_\pm + z_2. 
$$
In the new coordinates the system becomes
\begin{equation}
\left\{
\begin{array}{l}\label{e:5*}
\dot z_1 = z_1(-(2g_{12}c_\pm +3g_{22})-2g_{12}z_2)\\
\dot z_2 = z_2 ((8g_{12}c_\pm+6g_{22}+4g_{11})+4g_{12}z_2).
\end{array}\right.
\end{equation}
This is a system for which the origin in an elementary singularity (the linear part is not zero). 
To determine the dynamics we need to understand the sign of the coefficients of the linear
part, i.e., of
$$
\lambda_1 = -(2g_{12}c_\pm +3g_{22}) = -\frac{3}{2} g_{22}+g_{11} \mp \frac{1}{2}
\sqrt{{4g_{11}^2+9g_{22}^2+12g_{11}g_{22}-24g_{12}^2}}
$$
and 
$$
\lambda_2 = 8g_{12}c_\pm+6g_{22}+4g_{11} = \pm 2 \sqrt{{4g_{11}^2+9g_{22}^2+12g_{11}g_{22}-24g_{12}^2}}.
$$

 \begin{figure}[htb]
\begin{center}
\leavevmode
\includegraphics[scale=0.6, trim = 4mm 4mm 2mm 4mm, clip]{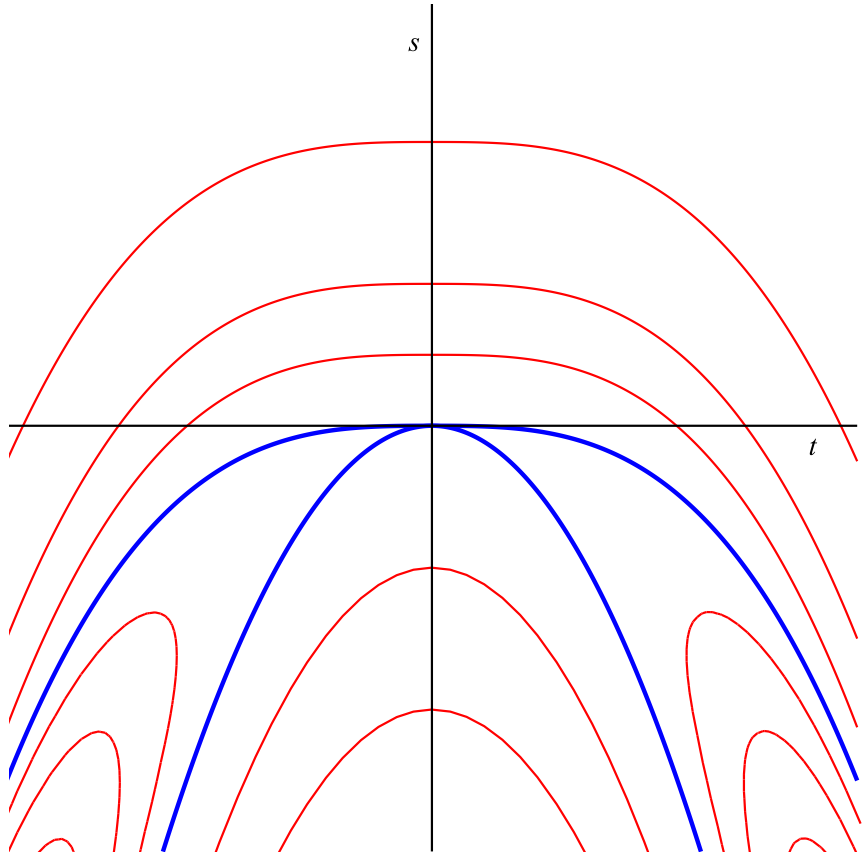}
\end{center}
\caption{Phase portrait of \eqref{e:sup}, $g_{12}>0$.}
\label{fig7}
\end{figure}
 
{\bf Claim.}  {\it $\lambda_1$ and $\lambda_2$ are of the opposite sign both for $c_+$ and $c_-$.}

\medskip 
 
First note that $\lambda_1$ and $\lambda_2$ depend only on the coefficients $g_{jk}$, i.e., only on the
linear part of the map $\psi \circ \phi$. Therefore, it is enough to prove the claim for linear symplectomorphisms.
If $\phi$ is the identity map, then it is easy to see that $\lambda_1$ and $\lambda_2$ are of the
opposite sign. 

Suppose that for some linear symplectic map $\phi_0$, the sign of $\lambda_1$ and $\lambda_2$ is the same. 
Since the symplectic group is connected, there is a path $\gamma \subset {\rm Sp}(4,\rl)$ connecting the identity 
and $\phi_0$, and  since $\lambda_j$ depend continuously on $\phi$, there exists a symplectic map on 
$\gamma$ for which one of the $\lambda_j$ is zero. Since $\mathcal D>0$, it has to be $\lambda_1$.
So $ -\frac{3}{2}g_{22} + g_{11} = \pm \frac{1}{2}\sqrt{\mathcal D}$. Therefore,
$$
4g_{11}^2 - 12g_{11}g_{22} +9g_{22}^2 = {4g_{11}^2+9g_{22}^2+12g_{11}g_{22}-24g_{12}^2}.
$$
This implies that $\Delta=0$ -- contradiction. This proves the claim. 

\medskip

Since $\lambda_j$ are of different sign, it follows that both for $c_+$ and $c_-$,  system~\eqref{e:5*} 
is a saddle at the origin. Now we are able to describe the overall dynamics in $\mathcal U_1^{(1)}$. In 
$(y_1,y_2)$- coordinates we have the following: $y_2$-axis as well as the lines $y_2=c_+$ and $y_2=c_-$ 
are the integral curves.  More precisely, the integral curves are six half-lines: $L_1 = \{ (y_1, c_+), y_1 > 0 \}$, 
$L_2 = \{ (y_1, c_+), y_1 < 0 \}$ , $L_3 = \{ (y_1, c_-), y_1 > 0 \}$, 
$L_4 = \{ (y_1, c_-), y_1 < 0 \}$, $L_5 = \{ (0, y_2): y_2 > c_+\}$, $L_6 = \{ (0, y_2): y_2 < c_-\}$, and 
one interval $I = \{ (0,y_2):   \min\{c_-, c_+\} < y_2 < \max\{c_-,c_+\} \}$ . The phase portraits near the points 
$(0, c_+)$ 
and $(0, c_-)$ are saddles, whose orbits in between the lines $y_2=c_+$ and $y_2=c_-$ 
are glued together, and are asymptotic to $L_1$ , $L_3$ or to  $L_2$, $L_4$ ; they do not touch $I$ .
Other orbits are asymptotic to $L_2$, $L_5$ or to $L_5$, $L_1$ or to $L_6$, $L_4$ or, finally, to $L_6$, $L_3$
(see Fig.~\ref{fig6}).
Going back to the original system via the inverse transformation 
to~\eqref{e:*}, we see that the $y_2$-axis blows down to a point, and we have two integral curves
$s=c_\pm t^2$ entering the origin, while other integral curves are contained in the compliment of these
two curves.  Now, if we choose $\e>0$ sufficiently small in \eqref{e:U}, we see that both curves 
$s=c_\pm t^2$ enter $\mathcal U_1^{(1)}$. This completes {\it Case 5.}

\medskip

Now if we combine all 5 cases together, and glue the integral curves from all cases, we see that
the phase portrait at the origin of system~\eqref{e:system} is a saddle (Fig.~\ref{fig7}). 
With this analysis we can now conclude the proof of Proposition \ref{l:2}. Indeed, let $\gamma_1$ and 
$\gamma_2$ be the curves $s=c_\pm t^2$. If $K$ is a small compact not contained in the union of
$\gamma_1$ and $\gamma_2$, then one of the hyperbolas of the characteristic foliation will touch
$K$ at some point. This proves Proposition \ref{l:2}.



\end{document}